\newcommand{\ra}{\!\mathop{\rightarrow}}
\newcommand{\sq}{\mathbin{\square}}
\newtheorem{theorem}{Theorem}[section]
\newtheorem{lemma}[theorem]{Lemma}
\newtheorem{conjecture}[theorem]{Conjecture}
\newtheorem{proposition}[theorem]{Proposition}
\newtheorem{question}[theorem]{Question}
\theoremstyle{definition}
\newtheorem{definition}[theorem]{Definition}
\newtheorem{remark}[theorem]{Remark}
\newtheorem{example}[theorem]{Example}
\newtheorem{exercise}[theorem]{Exercise}
\title{Bond percolation does not simulate site percolation}
\author{Nikita~Gladkov \and Alexander~P.~Zimin}
\date{}
\begin{document}
\maketitle

\begin{abstract}
We show that a site percolation is a stronger model than a bond percolation. We use the van den Berg--Kesten (vdBK) inequality to prove that site percolation on a neighborhood of a vertex of degree $4$ cannot be simulated even approximately by bond percolation, and develop a decision tree technique to prove the same for a neighborhood of a vertex of degree $3$. This technique can be used to obtain inequalities for connectedness probabilities, including a conjectured inequality of Erik Aas.
\end{abstract}

% BEGIN inlined content from Text_v2.tex (no external inputs; arXiv flat submission)
\section{Introduction}

For a graph $G=(V,E)$ we consider three independent opening models.
The \emph{(inhomogeneous) Bernoulli bond percolation} $\mu$ opens each
edge $e\in E$ with probability $p_e$, independently of other edges, whereas the
\emph{(inhomogeneous) Bernoulli site percolation} $\sigma$ opens each
vertex $v\in V$ with probability $p_v$.  The parameters $p_e$ and $p_v$
may vary from edge to edge and from vertex to vertex throughout the paper.

Finally, we shall occasionally need a third model.
Given a hypergraph $H=(V,E)$ -- so each $e\in E$ is a finite
subset of $V$ -- the \emph{(inhomogeneous) Bernoulli hyperedge
percolation} $\eta$ opens each hyperedge $e$ with probability
$p_e\in[0,1]$, independently for different $e$.
When $|e|=2$ for every hyperedge, this reduces to the bond model; thus
hyperedge percolation strictly contains bond percolation as a special
case.

\medskip
\noindent\textbf{Observable vertices.}
Throughout the paper we work with a chosen subset $V_{\mathrm{obs}}\subseteq V$
called the \emph{observable vertices}.
We care only about how these vertices are connected to each other.
Vertices in $V\setminus V_{\mathrm{obs}}$, which we call
\emph{auxiliary}, may lie on paths but never serve as endpoints in the
events we consider.
In the bond and hyperedge models, we simply take $V_{\mathrm{obs}}=V$.
In the site model we also set $p_v=1$ for every $v\in V_{\mathrm{obs}}$,
so each observable vertex is always open.

\medskip
\begin{definition}[Connectivity]\label{def:connectivity}
Let $\rho\in\{\mu,\sigma, \eta\}$ be either percolation on $G=(V,E)$ and let
$u,v\in V_{\mathrm{obs}}$.  We write
\[
  u \;\xleftrightarrow[\rho]{}\; v
\]
if there exists a path
$P=(w_0=u,\dots,w_k=v)$ in $G$ such that
\begin{itemize}
\item when $\rho=\mu$ (bond case), every edge $(w_i,w_{i+1})$ of $P$
is open;
\item when $\rho=\sigma$ (site case), every vertex $w_i$ of $P$ is
open, including the endpoints $u$ and~$v$;
\item when $\rho=\eta$ (hyperedge case), for each $i$ there exists an open
hyperedge $e_i\in E$ with ${w_i,w_{i+1}}\subseteq e_i$.
\end{itemize}
We denote the probability of this event by
\[
  \rho(uv)\;:=\;\rho(u\xleftrightarrow[\rho]{}v).
\]
\end{definition}

One can ask many questions about the probabilistic properties of clusters
connected via open vertices and edges.  There are well known
inequalities comparing critical probabilities of site and bond
percolation on the same infinite graph \cite{GriSta}.

To motivate our problem, recall Exercise 3.4 in \cite{Gri} (see also
Exercise 6 in \cite{DC}):
\begin{exercise}\label{Ex:Grim}
  ``Show that bond percolation on a graph $G$ may be reformulated in terms
  of site percolation on a graph derived suitably from $G$.''
\end{exercise}
Here is a formal definition.

\begin{definition}[Exact simulation]\label{def:simulation}
    Let $\rho$ be a percolation measure on a graph $G=(V,E)$ with observable
    set $V_{\mathrm{obs}}$, and let $\rho'$ be a percolation measure on a
    (possibly different) graph $G'=(V',E')$ with observable set
    $V'_{\mathrm{obs}}$.  We say that \emph{$\rho$ simulates $\rho'$} if
    there exists a map $f:V_{\mathrm{obs}}\to V'_{\mathrm{obs}}$ such that,
    for every Boolean combination of the events
    $\{\,v_i\xleftrightarrow[\rho]{}v_j\}$ with $v_i,v_j \in V_{\mathrm{obs}}$,
    \[
      \rho\bigl(\mathcal{E}\bigr)
      \;=\;
      \rho'\bigl(\mathcal{E}\circ f\bigr),
    \]
    where $\mathcal{E}\circ f$ is obtained from $\mathcal{E}$ by replacing
    each $v_i$ with $f(v_i)$ and each connectivity symbol
    $\xleftrightarrow[\rho]{}$ with $\xleftrightarrow[\rho']{}$.
    \end{definition}

\begin{remark}
By this definition, the simulation preserves events such as ``At least $n$ out of $m$ vertices $v_1, \dots, v_m$ are in the same cluster'', but is not guaranteed to preserve the probability of ``There is a path from $a$ to $b$ avoiding vertex $c$''.
\end{remark}

The following theorem \cite{Fis, FE} solves Exercise~\ref{Ex:Grim} by constructing a site percolation that simulates any given
bond percolation.

\begin{theorem}\label{bondinsite}
For every graph $G$ equipped with a bond percolation $\mu$
there exists a graph $G'$ together with a site percolation $\sigma$
that simulates $\mu$.
\end{theorem}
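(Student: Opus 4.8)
The plan is to use the classical \emph{edge-subdivision} construction. Given $G=(V,E)$ with edge-probabilities $(p_e)_{e\in E}$, I would build $G'=(V',E')$ by inserting a new vertex $w_e$ ``in the middle'' of each edge $e=\{u,v\}$: set $V'=V\sqcup E$ and let $E'$ consist, for each $e=\{u,v\}\in E$, of the two edges $\{u,w_e\}$ and $\{v,w_e\}$. Define the site percolation $\mu_s$ on $G'$ by declaring every original vertex $v\in V$ to be open with probability $1$ and every ``edge vertex'' $w_e$ to be open with probability $p_e$, all independently. Let $f:V\to V'$ be the inclusion map.

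The core of the argument is a measure-preserving bijection $\Phi$ between bond configurations $\omega\in\{0,1\}^{E}$ for $\mu_b$ and those site configurations on $G'$ lying in the (full-measure) support of $\mu_s$ --- namely those in which every $v\in V$ is open --- defined by letting $\Phi(\omega)$ declare $w_e$ open exactly when $\omega_e=1$. Since $\mu_s$-almost surely every vertex of $V$ is open, and the states of the $w_e$ are independent with the same marginals as the $e$, the map $\Phi$ pushes $\mu_b$ forward to $\mu_s$.

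Next I would verify that $\Phi$ intertwines the connectivity events. The graph $G'$ is bipartite with parts $V$ and $E$, so any path in $G'$ joining two vertices $f(u),f(v)\in V$ is necessarily of the alternating form $u=v_0,w_{e_1},v_1,w_{e_2},\dots,v_k=v$; such a path is open in $\Phi(\omega)$ if and only if each $w_{e_i}$ is open (the $v_i\in V$ being always open), i.e.\ if and only if $e_1,\dots,e_k$ is a path of open edges of $G$ joining $u$ to $v$. Hence the event ``$u$ connected to $v$'' under $\omega$ coincides with the event ``$f(u)$ connected to $f(v)$'' under $\Phi(\omega)$, for every $u,v\in V$. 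Because $\Phi$ is a measure-preserving bijection carrying each atomic connectivity event to the corresponding one in $G'$, it carries every Boolean combination of such events to the corresponding Boolean combination with equal probability, which is precisely what the definition of ``simulates'' requires.

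I do not expect a genuine obstacle: the construction is essentially forced and the verification is routine. The only points deserving care are (i) ensuring the matching holds simultaneously for \emph{all} Boolean combinations, which is why it is cleanest to exhibit the single coupling $\Phi$ rather than to argue event by event, and (ii) observing that an open-probability equal to $1$ is a permissible Bernoulli parameter for the site percolation. Multi-edges and loops, if present in $G$, are handled by the identical construction without modification.
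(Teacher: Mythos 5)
Your construction is exactly the one in the paper: subdivide each edge with a new vertex $w_e$ open with probability $p_e$, keep original vertices open with probability $1$, and observe that the resulting site percolation on $G'$ is isomorphic to bond percolation on $G$. You simply spell out the coupling $\Phi$ and the path-alternation argument that the paper leaves implicit; the approach and its correctness are the same.
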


\begin{proof}
    Make a copy $G'$ of $G$ and insert a new auxiliary vertex $w_e$ in the
    middle of each edge $e=\{u,v\}$.  
    We declare all original vertices observable and open with probability $1$,  
    while each auxiliary vertex $w_e$ is declared open independently with
    probability $p_e$.  
    Two original vertices are connected in $\sigma$ iff every $w_e$ on the
    corresponding path is open, i.e.\ exactly when every edge $e$ on that
    path is open in $\mu$.  Thus $\sigma$ simulates $\mu$ in the sense of
    Definition~\ref{def:simulation}.
\end{proof}

Similarly, it is natural to ask whether site percolation can be simulated by bond percolation. 
Fisher \cite{Fis} noted that the other direction cannot be true since the argument proving Theorem~\ref{bondinsite} is only invertible for line graphs. We make his argument precise in Theorem~\ref{sitenotinbond} proved in Section~\ref{remarks}. However, the question becomes more interesting if we consider approximate simulations.

\begin{definition}[Approximate simulation]\label{def:approx_sim}
    Let $\{\rho_i\}$ be a sequence of percolation measures on graphs
    $G_i=(V_i,E_i)$, each with observable set $V_{i,\mathrm{obs}}\subseteq V_i$.
    Let $\rho$ be a percolation measure on a (possibly different)
    graph $G=(V,E)$ with observable set $V_{\mathrm{obs}}\subseteq V$.
    
    We say that \emph{$\{\rho_i\}$ approximately simulates $\rho$} if there
    exist vertex maps
    \[
      f_i : V_{\mathrm{obs}} \longrightarrow V_{i,\mathrm{obs}}
      \qquad(i=1,2,\dots)
    \]
    such that for every Boolean combination $\mathcal{E}$ of connectivity events
    $\{\,v_j\xleftrightarrow[\rho]{}v_k\}$ with $v_j,v_k \in V_{\mathrm{obs}}$,
    \[
      \rho_i\bigl(\mathcal{E}\circ f_i\bigr)
      \;\longrightarrow\;
      \rho\bigl(\mathcal{E}\bigr)
      \qquad\text{as } i\to\infty,
    \]
    where $\mathcal{E}\circ f_i$ is obtained from $\mathcal{E}$ by replacing
    each $v_j$ with $f_i(v_j)$ and each instance of
    $\xleftrightarrow[\rho]{}$ with $\xleftrightarrow[\rho_i]{}$.
\end{definition}

  We denote by \(K_{1,n}\) the star graph with one internal vertex and \(n\) leaves.
  Throughout this section we work with the site percolation model on \(K_{1,n}\) (\(n\ge 3\))
  whose \emph{observable} set is the collection of leaves.
  Every observable leaf is deterministically open (\(p_v=1\)), while the unique
  internal (auxiliary) vertex is open with probability \(p\in(0,1)\).
  Equivalently, this model can be viewed as a \emph{full hyperedge percolation}
  on the same leaf set: opening the internal vertex corresponds to opening the
  single hyperedge \(\{\text{all leaves}\}\) with probability \(p\); when that hyperedge
  is closed (\(1-p\)), no two leaves are connected, whereas when it is open
  all leaves belong to one common cluster.  
  
  The main results of the paper are the following theorems.
  
  \begin{theorem}\label{4vgraph}
  Site percolation on $K_{1,4}$ cannot be approximately simulated by bond
  percolation.
  \end{theorem}
  
  \noindent
  Theorem \ref{4vgraph} is deduced from Theorem~\ref{4hypernotinbond}.
  
  \begin{theorem}\label{3vgraph}
  Site percolation on $K_{1,3}$ cannot be approximately simulated by bond
  percolation.
  \end{theorem}
  
  \noindent
  Theorem \ref{3vgraph} follows from Theorem~\ref{3hypernotinbond}.
  Its proof relies on new inequalities for connectivity events.
  We introduce auxiliary events defined by decision trees to establish
  \eqref{eq:final}, and use computer search to discover further
  inequalities, including \eqref{eq:computer} and~\eqref{e2} in
  Section~\ref{computer}.

  \section{Preliminary remarks}\label{remarks}

 In what follows we mostly work with a hyperedge viewpoint on the site percolation on \(K_{1, n}\) 
 introduced above, since for the full hyperedge percolation one can conveniently assume \(V_{\mathrm{obs}} = V\).  The next result justifies this flexibility.

\begin{theorem}[Site--hyperedge equivalence]\label{thm:site-hyper}
  Every Bernoulli hyperedge percolation measure can be simulated by a
  Bernoulli site percolation measure, and vice versa, in the sense of
  Definition~\ref{def:simulation}.
\end{theorem}

\begin{proof}
\textbf{(i) Hyperedge simulated by site percolation.}
Let \(H=(V,E)\) be a hypergraph equipped with Bernoulli
hyperedge percolation \(\eta\).
Construct a graph \(G'=(V',E')\) and a site percolation
measure \(\sigma\) as follows:
\begin{itemize}
  \item \emph{Vertices:} \(V' := V \cup \{w_e : e\in E\}\),
        adding auxiliary vertex \(w_e\) for each hyperedge \(e\).
  \item \emph{Edges:} for every \(e\in E\) and \(v\in e\) include
        \(\{v,w_e\}\) in \(E'\).
  \item \emph{Site probabilities:}
        \(\sigma(v\text{ open}) = 1\) for \(v\in V\) and
        \(\sigma(w_e\text{ open}) = p_e\).
\end{itemize}
Take observable set \(V_{\mathrm{obs}}\subseteq V\) and set \(V'_{\mathrm{obs}} := V_{\mathrm{obs}}\) together with the identity map \(f:V_{\mathrm{obs}}\to V'_{\mathrm{obs}}\), a \(\sigma\)-open path
\(u-w_{e_1}-\dots-w_{e_m}-v\)
exists if and only if the hyperedges \(e_1,\dots,e_m\) are \(\eta\)-open.
Hence \(\sigma\) simulates \(\eta\).

\medskip
\noindent\textbf{(ii) Site simulated by hyperedge percolation.}
Let \(G=(V,E)\) carry Bernoulli site percolation \(\sigma\) with
\(\sigma(v\text{ open}) = p_v\).
Build a hypergraph \(H'=(V',E')\) and percolation \(\eta'\) as follows:
\[
V' := V \cup \{w_e : e\in E\}, \qquad
E' := \{\,e_v : v\in V\},
\]
\[
e_v := \{v\} \cup \{w_e : v\in e\}, \qquad
\eta'(e_v\text{ open}) = p_v .
\]

Take observable set \(V_{\mathrm{obs}}\subseteq V\) and set \(V'_{\mathrm{obs}}:=V_{\mathrm{obs}}\); let \(g:V_{\mathrm{obs}}\to V'_{\mathrm{obs}}\) be the identity inclusion.

If \(u,v\in V_{\mathrm{obs}}\) are \(\sigma\)-connected, there is a path
\(u = x_0, x_1, \dots, x_k = v\) in \(G\) with all \(x_i\) open.
Each step \(x_i x_{i+1}\) uses the auxiliary vertex \(w_{x_i x_{i+1}}\),
so the open hyperedges \(e_{x_i}\) give an \(\eta'\)-path from \(g(u)\)
to \(g(v)\).

Conversely, any \(\eta'\)-path between \(g(u)\) and \(g(v)\) alternates
\(e_{x_i}\) and \(w_{x_i x_{i+1}}\); hence
\(u = x_0, x_1, \dots, x_k = v\) is a \(\sigma\)-open path in \(G\).
Thus connectivity events coincide, and \(\eta'\) simulates \(\sigma\).
\end{proof}

Note that the hypergraph percolation in the sense of \cite{WZ} is more general than our full hypergraph percolation and is capable of modeling more phenomena.

 Now we know that simulating site percolation is equivalent to simulating full hyperedge percolation. It is easy to see that bond percolation cannot simulate \emph{exactly} even a hyperedge of size $3$ with probability $0 < p < 1$, thus proving Fisher's remark.

Fix a Bernoulli bond percolation $\mu$ on a graph $G=(V,E)$ and write $\mu(\,\cdot\,)$ for probabilities taken with respect to this measure.

\begin{definition}
For pairwise disjoint, non-empty vertex sets $A_1,\dots ,A_k\subseteq V$ we write
$$A_1|A_2|\dots|A_k$$
for the event that
\begin{itemize}
  \item all vertices inside each $A_i$ lie in the \emph{same} open cluster, and
  \item the clusters corresponding to different $A_i$'s are \emph{distinct} (no vertex of $A_i$ is connected to a vertex of $A_j$ for $i\ne j$).
\end{itemize}
\end{definition}

\begin{remark}
Throughout the paper we will use the following shortcuts:
$$
\mu(abc):=\mu(\,\{a,b,c\}\,),\qquad
\mu(ab|c):=\mu(\{a,b\}|\{c\}),\qquad
\mu(a|b|c):=\mu(\{a\}|\{b\}|\{c\}),\ \ldots
$$
Throughout, a vertical bar ``$|$'' indicates that the vertices on either side of the bar must be in different clusters in the configuration and vertices between consecutive bars are in the same connected cluster.
\end{remark}

First, we show that the exact simulation for 3-hyperedges is impossible.

\begin{theorem}\label{sitenotinbond}
Let $G=(V,E)$ be any graph, let $\mu$ be a Bernoulli bond percolation on $G$, and let $a,b,c\in V$.  Then either
$$\mu(abc)+\mu(a|b|c)<1\qquad\text{or}\qquad \max\{\mu(abc),\,\mu(a|b|c)\}=1.$$
\end{theorem}
\begin{proof}
Delete every edge whose parameter is~$0$ and contract every edge whose parameter is~$1$; all remaining edges now have parameters strictly between $0$ and~$1$.

Assume for a contradiction that
\begin{equation}\label{eq:contradiction}
0<\mu(abc),\ \mu(a|b|c)<1
\quad\text{and}\quad
\mu(abc)+\mu(a|b|c)=1 .
\end{equation}

Because $\mu(a|b|c)>0$, each of the mixed-cluster events
\[
ab|c,\quad ac|b,\quad a|bc
\]
must have probability $0$, otherwise the left-hand side of \eqref{eq:contradiction} would be strictly smaller than~$1$.

Let us show that $\mu(ab|c)=0$ forces \emph{every} $a$--$b$ path in $G$ to pass through $c$.   
Indeed, if a path $\gamma$ joining $a$ to $b$ while avoiding $c$ existed in $G$, the configuration in which the edges of $\gamma$ are declared open and all other edges are declared closed would realise the event $ab|c$ with positive probability (all edge parameters lie in $(0,1)$), contradicting $\mu(ab|c)=0$.  Repeating the same argument for the other two zero-probability events we deduce

\begin{itemize}
\item every $a$--$b$ path visits $c$,
\item every $a$--$c$ path visits $b$, and
\item every $b$--$c$ path visits $a$.
\end{itemize}

These three properties cannot be satisfied simultaneously.  Indeed, the first property forces any $a$--$b$ path to begin with an $a$--$c$ sub-path; by the second property that sub-path must pass through $b$ before it reaches $c$.  Hence \emph{any} $a$--$b$ path would have to contain vertex $b$ twice, which is impossible for a (simple) path in a graph.  Thus these properties are impossible, contradicting \eqref{eq:contradiction}.

Consequently our assumption is false and the theorem follows.
\end{proof}

The theorem implies that bond percolation cannot simulate a non-trivial hyperedge percolation on the graph containing exactly three vertices and a single hyperedge connecting these. Indeed, in such a hyperedge percolation, the probability that all three vertices are connected equals $p$ (the probability that the hyperedge is open), while the probability that they are all disconnected equals $1-p$. Theorem~\ref{sitenotinbond} shows that no bond percolation can achieve both these probabilities simultaneously.

Although Theorem \ref{sitenotinbond} prohibits exact simulation of hyperedge percolation by bond percolation, we consider whether it is possible to have an arbitrarily good approximation.

\begin{question}\label{question_k}
For given $k$, $p \not\in \{0, 1\}$ and $\varepsilon>0$, does there exist a graph $G = (V, E)$, containing vertices $x_1$, \dots, $x_k$ and a bond percolation $\mu$ on it with $\mu(x_1x_2\dots x_k) > p-\varepsilon$ and $\mu(x_1|x_2|\dots|x_k) > 1-p-\varepsilon$?
\end{question}

In Section~\ref{4v} we show that approximate simulation is impossible for $k \ge 4$ using a lemma due to Hutchcroft \cite{Hut}, thus proving Theorem~\ref{4vgraph}.
Finally, we develop a new technique using decision trees to resolve Question \ref{question_k} for $k=3$ (thereby proving Theorem~\ref{3vgraph}) in Section~\ref{3v}.

\section{Simulating \boldmath\texorpdfstring{$k$}{k}-hyperedge for \boldmath\texorpdfstring{$k \ge 4$}{k>=4}}\label{4v}
In \cite{Hut}, the following theorem is proved using the vdBK inequality~\cite{BK}, where $K_u$ is the cluster containing vertex $u$, and for each finite subset $\Lambda \subseteq V$
$$|K_{max}(\Lambda)| = \max\{|K_v \cap \Lambda| : v \in V\}$$ 
is the maximal number of vertices from $\Lambda$ belonging to the same cluster.

\begin{theorem}[\cite{Hut}, Theorem 2.3]
Let $G = (V, E)$ be a countable graph and let $\Lambda \subseteq V$ be finite and non-empty. Then for any Bernoulli bond percolation $\mu$ on $G$ one has
\begin{equation}\label{hc}
\mu(|K_{max}(\Lambda)| \ge 3^k\lambda) \le \mu(|K_{max}(\Lambda)| \ge \lambda)^{3^{k-1}+1}
\end{equation}
and 

\begin{equation}\label{hc+}
\mu(|K_u\cap \Lambda| \ge 3^k\lambda) \le \mu(|K_{max}(\Lambda)| \ge \lambda)^{3^{k-1}}\mu(|K_u\cap \Lambda| \ge \lambda)
\end{equation}
for every $\lambda \ge 1$ (not necessarily integer), integer $k \ge 0$ and $u \in V$.
\end{theorem}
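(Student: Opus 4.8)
The plan is to deduce both \eqref{hc} and \eqref{hc+} from the van den Berg--Kesten (vdBK) inequality for increasing events, by exhibiting, inside every configuration of the left-hand event, the required number of \emph{pairwise edge-disjoint} open witnesses of the right-hand events. For $\mu \ge 1$ put $E_\mu = \{\,|K_{max}(\Lambda)| \ge \mu\,\}$ and $F^u_\mu = \{\,|K_u \cap \Lambda| \ge \mu\,\}$; both are increasing, and on any configuration in $E_\mu$ (resp.\ $F^u_\mu$) one may choose a finite open subtree inside the realising cluster that meets $\Lambda$ in at least $\mu$ vertices (resp.\ also contains $u$). Because only the finitely many vertices of $\Lambda$ are relevant, every event in sight is an increasing union of events on finitely many edges, so the inequality $\P(A_1 \circ \cdots \circ A_m) \le \prod_{i=1}^m \P(A_i)$ for increasing $A_i$ (with $\circ$ denoting disjoint occurrence) is available in the countable setting by a standard exhaustion.

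The combinatorial core is a tree-surgery lemma, to be proved by induction on $p$: \emph{if $T$ is a finite tree with at least $(2p-1)\lambda$ marked vertices ($p\ge1$ integer, $\lambda\ge1$ real), then $T$ contains $p$ pairwise edge-disjoint connected subtrees, each meeting the marked set in at least $\lambda$ vertices; if moreover $T$ is rooted and has at least $(2p+1)\lambda$ marked vertices, then it contains $p+1$ such subtrees, pairwise edge-disjoint, one of which contains the root.} The inductive step carves off a single subtree: root $T$, let $m(x)$ count the marked vertices of the subtree below $x$, and let $w$ be a deepest vertex with $m(w)\ge\lambda$. If $m(w)\le 2\lambda$, take the subtree below $w$; otherwise every child of $w$ has subtree-count $<\lambda$, so greedily accumulate child-subtrees of $w$ (first appending $w$ itself if it is marked) until the running marked count first reaches $\lambda$ --- it then lies in $[\lambda,2\lambda)$ --- and take that connected piece. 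In both cases the carved piece is connected with marked count in $[\lambda,2\lambda]$; deleting its edges (equivalently, collapsing it to its unique attachment vertex) leaves a tree that still contains the root and whose marked count has dropped by at most $2\lambda$, so the induction hypothesis with $p-1$ applies. The root is never lost, because the only way $w$ can be the root is when $m(w)>2\lambda$, and then the carved piece is $w$ together with some strict child-subtrees, so collapsing it retains $w$.

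Granting the lemma, each inequality follows from a pointwise event-inclusion plus vdBK. For \eqref{hc} with $k\ge1$: on a configuration in $E_{3^k\lambda}$ take a finite open subtree as above; since $3^k\lambda = 3\cdot3^{k-1}\lambda \ge \bigl(2(3^{k-1}+1)-1\bigr)\lambda$, the lemma with $p=3^{k-1}+1$ produces $3^{k-1}+1$ edge-disjoint open subtrees, each certifying $E_\lambda$, whence $E_{3^k\lambda}\subseteq E_\lambda^{\,\circ(3^{k-1}+1)}$ and vdBK gives $\P(E_{3^k\lambda})\le \P(E_\lambda)^{3^{k-1}+1}$. For \eqref{hc+} with $k\ge1$: on a configuration in $F^u_{3^k\lambda}$ take a finite open subtree of $K_u$ rooted at $u$; since $3^k\lambda \ge (2\cdot3^{k-1}+1)\lambda$, the rooted form of the lemma with $p=3^{k-1}$ produces $3^{k-1}$ edge-disjoint open subtrees certifying $E_\lambda$ together with one more, edge-disjoint from them and containing $u$, certifying $F^u_\lambda$, whence $F^u_{3^k\lambda}\subseteq E_\lambda^{\,\circ3^{k-1}}\circ F^u_\lambda$ and vdBK gives $\P(F^u_{3^k\lambda})\le \P(E_\lambda)^{3^{k-1}}\,\P(F^u_\lambda)$. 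The case $k=0$ is degenerate and is handled by the natural reading of the exponent.

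The step I expect to be the real obstacle is not the probabilistic one --- vdBK does all the work once the witnesses are edge-disjoint --- but pinning down the constants in the tree-surgery lemma: one must verify that $p$ pieces, each of marked-size at most $2\lambda$, can always be carved out of a tree with only $(2p-1)\lambda$ marked vertices while keeping the residue connected and, in the rooted case, root-containing, and that this survives non-integer $\lambda$; the identity $3\lambda=(2\cdot2-1)\lambda$ shows there is no slack to waste at $k=1$. A secondary, purely routine point is legitimising $\circ$ and vdBK on a countable graph, which is done by restricting everything to the finite subgraph spanned by the finitely many open subtrees constructed above.
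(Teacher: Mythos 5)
The paper does not prove this theorem: it is quoted verbatim from Hutchcroft \cite{Hut} (the text preceding it says only that in \cite{Hut} the theorem ``is proved using the vdBK inequality''), and the only use made of it later is the $k=1$ instance of~\eqref{hc}. So there is no internal proof to compare against; what you have written is essentially a reconstruction of Hutchcroft's argument, and it is, on the whole, correct and in the same spirit.

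Your tree-surgery lemma is stated and used correctly. In the unrooted form, carving a piece of marked size in $[\lambda,2\lambda]$ (respectively $[\lambda,2\lambda)$ in the ``deepest vertex has $m>2\lambda$'' case) and collapsing it to its attachment vertex drops the residual marked count by at most $2\lambda$, so $(2p-1)\lambda$ marked vertices yield $p$ edge-disjoint connected subtrees by induction on $p$; the rooted form with $(2p+1)\lambda$ and $p+1$ pieces follows the same pattern, and your observation that the root can only be the deepest heavy vertex when $m(\text{root})>2\lambda$ is exactly what keeps the root in the residue. Two small points to tidy up: in the $m(w)>2\lambda$ case you must include $w$ itself in the carved piece regardless of whether $w$ is marked (otherwise the union of sibling child-subtrees is disconnected), with $w$ appearing in both the carved piece and the residue (this is harmless since you only need edge-disjointness); and the ``standard exhaustion'' passing BK to countable graphs is indeed routine for increasing events depending on the finitely many relevant edges of the finite Steiner tree.

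Applying the lemma, $3^k\lambda\ge(2(3^{k-1}+1)-1)\lambda$ holds iff $3^{k-1}\ge1$, i.e.\ for $k\ge1$, and likewise for the rooted version with $p=3^{k-1}$; so your deduction of~\eqref{hc} and~\eqref{hc+} from BK is correct for $k\ge1$. In fact your lemma gives a marginally sharper exponent in~\eqref{hc}: from $3^k\lambda$ marked vertices you can extract up to $p=\lfloor(3^k+1)/2\rfloor=(3^k+1)/2$ edge-disjoint pieces, and $(3^k+1)/2\ge 3^{k-1}+1$ with strict inequality for $k\ge2$, so your argument actually strengthens the stated bound for $k\ge2$. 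The one place I would push back is the sentence ``the case $k=0$ is degenerate and is handled by the natural reading of the exponent'': as literally written, the $k=0$ case of both~\eqref{hc} and~\eqref{hc+} is false for generic $\P$ (e.g.\ $\P(E_\lambda)\le\P(E_\lambda)^{4/3}$ fails whenever $0<\P(E_\lambda)<1$). This is best understood as a harmless misquotation of Hutchcroft's statement in the present paper; the correct range is $k\ge1$, which is both what your proof establishes and all that Theorem~\ref{4hypernotinbond} needs.
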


This allows us to prove that one cannot even approximately simulate the $4$-hyperedge.

\begin{theorem}\label{4hypernotinbond}
For any $\delta>0$ there exists an $\varepsilon>0$ such that, for every graph $G=(V,E)$, every Bernoulli bond percolation $\mu$ on $G$, and every choice of four vertices $a,b,c,d\in V$, at least one of the following holds:
\begin{enumerate}
  \item $\mu(abcd)\;\ge 1-\delta$;
  \item $\mu(a|b|c|d)\;\ge 1-\delta$;
  \item $\mu(abcd)+\mu(a|b|c|d)\;< 1-\varepsilon$.
\end{enumerate}
\end{theorem}
\begin{proof}
    Let \(\delta>0\) and set \(\varepsilon=\delta^2/2\).  Suppose for some graph \(G\), bond percolation \(\mu\), and vertices \(a,b,c,d\) we have
    \[
    \mu(abcd)\le1-\delta
    \quad\text{and}\quad
    \mu(a|b|c|d)\le1-\delta.
    \]
    Write \(t=\mu(a|b|c|d)\in[0,1-\delta]\).  Then~\eqref{hc} with \(\lambda=\tfrac43\) and \(\Lambda = \{a, b, c, d\}\) gives
    \begin{equation}\label{eq:4hyperineq}
        \mu(abcd)\le\mu(ab\cup ac\cup ad\cup bc\cup bd\cup cd)^2.
    \end{equation}
    Since \(ab\cup ac\cup ad\cup bc\cup bd\cup cd\) is the complement of \(a|b|c|d\), its probability is \(1-t\), so
    \[
    \mu(abcd)\le(1-t)^2.
    \]
    Hence
    \[
    \mu(abcd)+\mu(a|b|c|d)=\mu(abcd)+t\le\min\{\,1-\delta+t,\;t+(1-t)^2\}.
    \]
    If \(t\le\delta/2\) then \(1-\delta+t\le1-\delta/2\le1-\varepsilon\), while if \(t\ge\delta/2\) then
    \[
    t+(1-t)^2
    =1-t(1-t)
    \le1-\tfrac{\delta}{2}\,\delta
    =1-\varepsilon.
    \]
    In either case \(\mu(abcd)+\mu(a|b|c|d)\le1-\varepsilon\), which is alternative (iii).  
    \end{proof}

\begin{remark}
    Another proof of \eqref{eq:4hyperineq} can be obtained by applying the vdBK inequality directly. For two events $A$, $B$, their disjoint occurrence $A \sq B$ is defined as the event consisting of configurations $x$ whose memberships in $A$ and in $B$ can be verified on disjoint subsets of edges. In this case, vdBK inequality asserts that
    \[
    \mu(A\sq B) \le \mu(A)\mu(B).
    \]
    
Let \(E\) be the event \(ab \cup ac \cup ad \cup bc \cup bd \cup cd\). Then \(E \sq E\) represents the event where there exist two edge-disjoint paths between the vertices \(a\), \(b\), \(c\), and \(d\). Hutchcroft's argument in this context reduces to demonstrating that \(abcd \subseteq E \sq E\), which is established in the following lemma.

\begin{lemma}
Let \(G\) be a connected graph, and let \(a_1, a_2, a_3, a_4\) be any four vertices in \(G\). Then, it is always possible to partition these vertices into two disjoint pairs, \(\{a_{i_1}, a_{i_2}\}\) and \(\{a_{j_1}, a_{j_2}\}\), such that there exist edge-disjoint paths \(\gamma_1\) and \(\gamma_2\) in \(G\), where \(\gamma_1\) connects \(a_{i_1}\) to \(a_{i_2}\) and \(\gamma_2\) connects \(a_{j_1}\) to \(a_{j_2}\).
\end{lemma}
\begin{proof}
Partition the vertices \(a_1, a_2, a_3, a_4\) into two disjoint pairs, \(\{a_{i_1}, a_{i_2}\}\) and \(\{a_{j_1}, a_{j_2}\}\), such that the sum of edge distances, \(d(a_{i_1}, a_{i_2}) + d(a_{j_1}, a_{j_2})\), is minimized across all possible partitions. Without loss of generality, assume that the vertices in the first pair are \(a_1\) and \(a_2\), and the vertices in the second pair are \(a_3\) and \(a_4\).

Let \(\gamma_1\) and \(\gamma_2\) be the shortest paths connecting \(a_1\) to \(a_2\) and \(a_3\) to \(a_4\), respectively. Suppose the edge \(uv\) belongs to both paths. Without loss of generality, we assume that the order of vertices in \(\gamma_1\) is \(a_1 \to u \to v \to a_2\); otherwise, we can swap the labels of \(a_1\) and \(a_2\). Similarly, we assume that the order of vertices in \(\gamma_2\) is \(a_3 \to u \to v \to a_4\).

Then, we calculate the following:
\begin{align*}
d(a_1, a_3) + d(a_2, a_4) &\leq \big(d(a_1, u) + d(u, a_3)\big) + \big(d(a_2, v) + d(v, a_4)\big) \\
&= \big(d(a_1, u) + d(v, a_2)\big) + \big(d(a_3, u) + d(v, a_4)\big) \\
&= \big(d(a_1, a_2) - 1\big) + \big(d(a_3, a_4) - 1\big).
\end{align*}

This inequality contradicts the selection of the pairs, as the pairs \(\{a_1, a_2\}\) and \(\{a_3, a_4\}\) were chosen to minimize \(d(a_1, a_2) + d(a_3, a_4)\). Therefore, the paths \(\gamma_1\) and \(\gamma_2\) must be edge-disjoint.
\end{proof}
% Consider a `` centroid '' vertex $v$ of the tree such that the sum of the distances from $v$ to $a, b, c$ and $d$ is minimal. It is an exercise in graph theory to show that there are two path between $a, b, c, d$ in the tree that share no vertices or share only $v$ as a common vertex.
\end{remark}

\begin{proof}[Proof of Theorem~\ref{4vgraph}]
    Fix an arbitrary \(p\in(0,1)\) and let \(\sigma_p\) be the site
    percolation on the star \(K_{1,4}\) in which the centre is open with
    probability \(p\) while the four leaves \(a,b,c,d\) are always open.
    Then
    \[
    \sigma_p(abcd)=p,\qquad
    \sigma_p(a|b|c|d)=1-p,\qquad
    \sigma_p(abcd)+\sigma_p(a|b|c|d)=1 .
    \]
    
    Choose \(\delta:=\frac12\min\{p,1-p\}>0\) and let
    \(\varepsilon>0\) be the constant provided by
    Theorem~\ref{4hypernotinbond}.  
    Assume, for contradiction, that \(\sigma_p\) can be approximately
    simulated by some bond-percolation measure \(\mu\).
    Make the simulation so accurate that
    \[
    |\mu(abcd)-p|<\delta,\qquad
    |\mu(a|b|c|d)-(1-p)|<\delta,\qquad
    |\mu(abcd)+\mu(a|b|c|d)-1|<\varepsilon/2 .
    \]
    
    Because each of \(\mu(abcd)\) and \(\mu(a|b|c|d)\) lies in
    \((\delta,1-\delta)\), conditions~(1) and~(2) of
    Theorem~\ref{4hypernotinbond} fail.
    The third bound yields
    \(\mu(abcd)+\mu(a|b|c|d) > 1-\varepsilon\),
    so condition~(3) fails as well.
    This contradiction shows that no bond percolation can approximately
    simulate \(\sigma_p\).
    \end{proof}

\section{Simulating 3-hyperedge: human proof}\label{3v}

%Now we see that it is impossible to even approximately simulate site percolation on $K_{1, 4}$ with bond percolation, as promised in Theorem~\ref{4vgraph}. To prove Theorem~\ref{3vgraph}, we need the following lemma.
In the previous section, we showed that a $4$-hyperedge cannot be approximately simulated by bond percolation; the key ingredient was the BK inequality. For $k=3$, however, the BK inequality appears too weak to rule out such a simulation. This case is of independent interest. Hollom~\cite{Hollom} recently constructed counterexamples to the hyperedge- and site-percolation versions of the bunkbed conjecture. If approximate simulation of a $3$-hyperedge by bond percolation were possible, then Hollom's construction could be transferred to the bond setting, which is the original formulation of the conjecture due to Kasteleyn. Notably, Hollom's hyperedge counterexample uses only $3$-hyperedges, which naturally motivates the question of whether a $3$-hyperedge can, in fact, be simulated by bond percolation. Ultimately, this line of reasoning led in~\cite{GPZ} to the simulation of a weaker WZ hyperedge percolation~\cite{WZ}; although weaker than full hyperedge percolation, it nevertheless suffices to produce a bond-percolation counterexample to the bunkbed conjecture.
 
\begin{definition}
For two configurations $C_1, C_2 \in \Omega = 2^{[E]}$ and a set $S \subseteq E$ we denote by $C_1 \ra_S C_2$ % $C_1|_S \cup C_2|_{\bar{S}}$
the configuration which coincides with $C_1$ on $S$ and $C_2$ on its complement $\overline S$.
\end{definition}

\begin{lemma}\label{tree_lemma}
Consider two independent Bernoulli bond percolations $C_1$ and $C_2$ having the same distribution $\mu$ on the same graph $G$. Let a decision tree $T$ select each edge and reveal it in both $C_1$ and $C_2$. Furthermore, allow on each step, before revealing, to decide if this edge will go to the set $S$ (thus dependent on $C_1$ and $C_2$) or to its complement $\overline S$. Then $C_1 \ra_S C_2$ is independent of $C_1 \ra_{\bar S} C_2$ and both of them are distributed as $\mu$.
\end{lemma}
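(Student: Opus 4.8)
The plan is to track the joint distribution of the pair $(C_1, C_2)$ as the decision tree $T$ runs, and to show that a natural invariant is maintained at every step: conditioned on the history of the process so far, the ``not-yet-revealed'' coordinates of $C_1 \ra_S C_2$ and of $C_1 \ra_{\bar S} C_2$ are independent of each other and of the history, each coordinate still being an independent $\mu$-distributed bit. Since $\mu$ is a product (Bernoulli) measure, it suffices to reason edge by edge.

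First I would set up the induction. At a given node of $T$, the process has revealed some set $R \subseteq E$ of edges in both $C_1$ and $C_2$, and has committed each revealed edge to either $S$ or $\bar S$; this data is the history $\mathcal{H}$. The inductive hypothesis is: conditioned on $\mathcal{H}$, the edges of $E \setminus R$ are independent, and for each such edge $e$ the two bits $C_1(e)$ and $C_2(e)$ are independent with $C_i(e) \sim \mu_e$. (This is immediate at the root, where $R = \varnothing$, because $C_1, C_2$ are independent Bernoulli percolations.) Now the tree picks an edge $e \notin R$, and — using only information in $\mathcal{H}$ — decides whether $e$ goes into $S$ or $\bar S$; then it reveals $C_1(e)$ and $C_2(e)$. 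The key point is that the assignment of $e$ to $S$ or $\bar S$ is $\mathcal{H}$-measurable, hence independent of the fresh bits $(C_1(e), C_2(e))$, which by the inductive hypothesis are two independent $\mu_e$-bits independent of everything revealed so far. So after this step the new history $\mathcal{H}'$ still leaves $E \setminus (R \cup \{e\})$ independent with the right marginals, and the induction goes through.

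Next I would read off the conclusion from the terminal nodes. When $T$ halts, every edge has been revealed and assigned to $S$ or $\bar S$. For an edge $e \in S$ we have $(C_1 \ra_S C_2)(e) = C_1(e)$ and $(C_1 \ra_{\bar S} C_2)(e) = C_2(e)$; for $e \in \bar S$ the roles swap. Thus the value of the pair $\big((C_1 \ra_S C_2)(e),\, (C_1 \ra_{\bar S} C_2)(e)\big)$ is, in either case, exactly the unordered-then-relabelled pair $(C_1(e), C_2(e))$. Since the step-by-step argument shows that conditioned on the full run of $T$ these pairs are independent across $e$ and each consists of two independent $\mu_e$-bits (the labelling ``which one is $C_1$'' being determined by $\mathcal{H}$, i.e. by the run), the two configurations $C_1 \ra_S C_2$ and $C_1 \ra_{\bar S} C_2$ are independent, each distributed as $\mu$. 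The identity $C_2 \ra_S C_1 = C_1 \ra_{\bar S} C_2$ is just unwinding the definition of $\ra_S$.

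I expect the main obstacle to be stating the measurability/adaptedness condition cleanly: one must be careful that the decision ``$e \in S$ or $e \in \bar S$'' at each node depends only on the already-revealed bits of $C_1$ and $C_2$ (and on $T$'s internal state), so that it is genuinely independent of the as-yet-unrevealed bit pair $(C_1(e), C_2(e))$. Once the conditioning is correctly formalized, each induction step is a one-line application of independence of a product measure from an adapted decision. A secondary subtlety is that $T$ may in principle be infinite; if $G$ is infinite one should either assume $T$ reveals every edge almost surely or take a limit over finite truncations, but the same invariant passes to the limit.
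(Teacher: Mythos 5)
Your proof is correct, but it takes a genuinely different route from the paper's. The paper argues \emph{globally}: it fixes any target pair of output configurations $(C_3, C_4)$, observes that there is exactly one root-to-leaf path of the decision tree producing this output, notes that the corresponding input pair $(C_1, C_2)$ differs from $(C_3, C_4)$ by an edge-wise transposition of the two coordinates, and concludes $\P(C_1)\P(C_2) = \P(C_3)\P(C_4)$ from the product structure of $\mu$. This is a bijective change-of-variables at the level of leaves, and it dispatches the lemma in a single sentence. You instead argue \emph{locally and dynamically}: you run the tree, maintain the invariant that the unrevealed coordinates of $(C_1,C_2)$ are still independent $\mu_e$-bits independent of the history, observe that the $S$-versus-$\bar S$ decision is history-measurable and hence independent of the fresh bit pair, and read off the product structure of the output at termination. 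Both arguments ultimately rest on the same fact (Bernoulli percolation is a product measure, so permuting the two coordinates edge-by-edge preserves the joint law), but the paper's counting argument is crisper for finite graphs, whereas your filtration-style induction is more in the spirit of how one would verify adaptedness rigorously and, as you note, passes more transparently to infinite graphs via truncation. One small remark: your inductive hypothesis is phrased about the unrevealed coordinates of $C_1, C_2$, which is the right thing — for unrevealed edges the $S/\bar S$ assignment is not yet determined, so the output coordinates $D_1 = C_1 \ra_S C_2$, $D_2 = C_1 \ra_{\bar S} C_2$ are only defined for revealed edges; your read-off step correctly assembles $(D_1, D_2)$ step by step as an adapted process with fresh independent increments.
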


\begin{example}\label{ex:2edges}
If the graph is a path of length $2$ from $a$ to $b$, then the tree $T$ in Figure~\ref{fig:tree} builds a set $S$ of all edges with one end in the component of $a$ in $C_1$.

\begin{figure}[!hbt]
\centering
\includegraphics[width=1\textwidth]{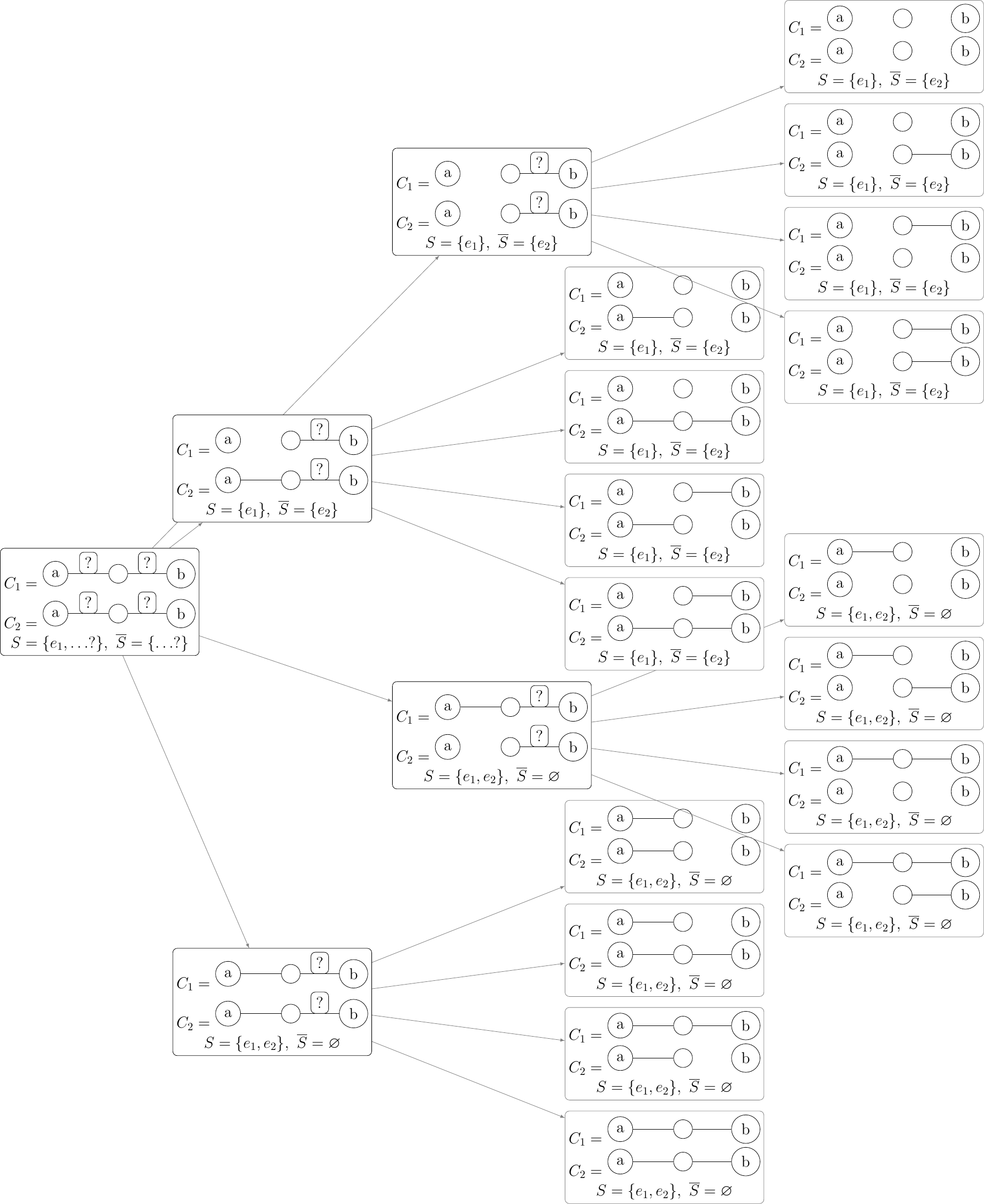}

   \caption{$T$ corresponding to the Example~\ref{ex:2edges}}\label{fig:tree}
\end{figure}
\end{example}

\begin{proof}[Proof of Lemma~\ref{tree_lemma}]
For every pair of configurations $C_3$, $C_4$ and given decision tree $T$ there exist unique $C_1$ and $C_2$ such that $C_1 \ra_{S(C_1, C_2)} C_2 = C_3$ and $C_1 \ra_{\bar S(C_1, C_2)} C_2 = C_4$. Indeed,  the path in $T$ leading to $C_1 \ra_{S(C_1, C_2)} C_2 = C_3$ and $C_1 \ra_{\bar S(C_1, C_2)} C_2 = C_4$ is determined uniquely at each step, and the probability of this path is equal to $\mu(C_3)\mu(C_4)$, which is equal to $\mu(C_1)\mu(C_2)$ since the probability in Bernoulli percolation is a product of probabilities for individual edges.
\end{proof}

\begin{example}\label{SConnected}
One can take $T$ querying all the edges from the vertices already known to connect to the vertex $a$ in $C_1$. It will assign all these edges to $S$ and then discover the remaining edges, assigning them to $\bar{S}$. Then $S$ will be the set of all open and closed edges with at least one edge in the component of $a$. 

Note that this set $S$ depends only on $C_1$. Given this, the configuration $C_1 \ra_S C_2$ can be interpreted as follows. We take the configuration $C_1$ and resample all the edges not connected to the cluster of $a$. Lemma~\ref{tree_lemma} claims that the resulting configuration has a distribution $\mu$. Moreover, if instead we resampled the edges connected to the cluster of $a$, it would also result in measure $\mu$. 
\end{example}

\begin{remark}
Notice that the Markov chain method from \cite{BHK} is based on the fact that resampling edges in $\overline{S}$ from Example~\ref{SConnected} preserves the measure restriction $\mu|_{a|b}$.
In our notation, it means that for $A = a|b$ and any $B$, one has
\begin{equation}\label{Markov}
\mu(C_1 \in A\text{ and } C_1 \ra_{S} C_2 \in B) = \mu(C_1 \ra_{S} C_2 \in A \cap B) = \mu(A \cap B)
\end{equation}

\end{remark}

\begin{theorem}\label{3hypernotinbond}
    For any \(\delta>0\) there exists an \(\varepsilon>0\) such that, for every graph \(G=(V,E)\), every Bernoulli bond percolation \(\mu\) on \(G\), and every choice of three vertices \(a,b,c\in V\), at least one of the following holds:
    \begin{enumerate}
      \item \(\displaystyle \mu(abc)\;\ge\;1-\delta\);
      \item \(\displaystyle \mu\bigl(a|b|c\bigr)\;\ge\;1-\delta\);
      \item \(\displaystyle \mu(abc)+\mu\bigl(a|b|c\bigr)\;<\;1-\varepsilon\).
    \end{enumerate}
\end{theorem}
    
\begin{remark}
It is worth noting that Theorem~\ref {3hypernotinbond} directly implies Theorem~\ref{4hypernotinbond}. 
\end{remark}
\begin{proof}

We will need multiple sets $S_i$ for our purpose. So, we define sets $S_1$, $S_2$ and $S_3$, which are somewhat complex (See Figure~\ref{fig:sets}).

\begin{figure}[tb]
\centering
\begin{minipage}{0.8\textwidth}
\begin{center}
\includegraphics[width=0.3\linewidth]{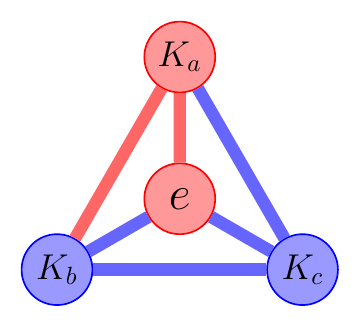}%
\includegraphics[width=0.3\linewidth]{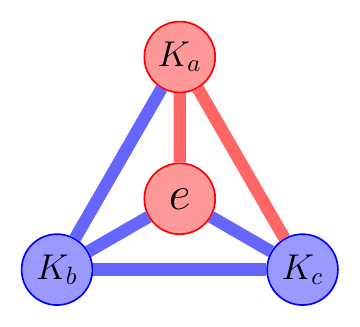}%
\includegraphics[width=0.3\linewidth]{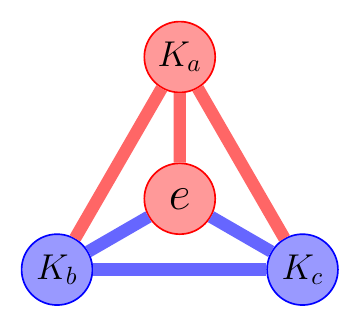}
\caption{$S_1$, $S_2$ and $S_3$ for the case $C_1 \in a|b|c$. Regions surrounding $a, b, c$ depict $K_a$, $K_b$ and $K_c$. Respective sets are in blue and their complements are in red.}\label{fig:sets}
\end{center}

\end{minipage}
\end{figure}

To build $S_1$, we query all edges connected to $c$ and put them in $S$. Then we query all not queried edges connected to $a$ (this is vacuous if $a$ was connected to $c$) and put them in $\bar{S}$. Then we query all not queried edges connected to $b$ and put them in $S$. Finally, we put the rest of the edges in $\bar{S}$. We denote by $K_x$ the set of vertices connected to $x$ via edges open in $C_1$. Then,

$$S_1 = 
\begin{cases}E \cap \big(K_c \times V \cup K_b \times \overline{K_a}\big) \text{ if $C_1 \in a|b|c$;}\\
E \cap \big(K_c \times V\big) \text{ if $C_1$ is in $abc$, $a|bc$ or $ab|c$;} \\
E \cap \big((K_b \cup K_c) \times V\big)\text{ if $C_1 \in ac|b$.}
\end{cases}$$

The only case we will actually use is $a|b|c$. $S_2$ is defined analogously with $b$ and $c$ interchanged.

$$S_2 = 
\begin{cases}E \cap \big(K_b \times V \cup K_c \times \overline{K_a}\big) \text{ if $C_1 \in a|b|c$;}\\
E \cap \big(K_b \times V\big) \text{ if $C_1$ is in $abc$, $a|bc$ or $ac|b$;} \\
E \cap \big((K_b \cup K_c) \times V\big)\text{ if $C_1 \in ab|c$.}
\end{cases}$$

Finally, for $S_3$ we put all edges connected to $a$ in $\bar{S}$, all not queried edges connected to $b$ or $c$ to $S$ and the rest of the edges to $\bar{S}$.

$$S_3 = \begin{cases}E \cap \big((K_b \cup K_c) \times \overline{K_a}\big)\text{ if $C_1 \in a|b|c$ or $a|bc$;}\\
\varnothing \text{ if $C_1 \in abc$;}\\
\text{Something else otherwise.}
\end{cases}$$

The key observation is the following:

\begin{proposition}\label{prop:key}
    For any two configurations $C_1$ and $C_2$ such that $C_1 \in a|b|c$ and $C_1\ra_{S_3}C_2 \in ab \cup ac$, one has either $C_1\ra_{S_1}C_2 \in ab$ or $C_1\ra_{S_2}C_2 \in ac$.
\end{proposition}

\begin{proof}
    Consider a path $\mathcal{P}$ from $a$ to $b$ or $c$ in $C_1\ra_{S_3}C_2$. Along this path, consider the first edge $uv$ incident to a vertex $v$ in $K_b \cup K_c$. Denote by $e$ the set of all the vertices of $G$ which do not belong to $K_a$, $K_b$ or $K_c$. The segment of $\mathcal{P}$ before $uv$ is contained in the complement of $K_b \cup K_c$, which is $K_a \cup e$, and thus it is contained in all of the sets $\bar{S_1}$, $\bar{S_2}$, and $\bar{S_3}$ (see Figure~\ref{fig:sets}: the sets $K_a$ and $e$ are shown in red, along with all the edges between them, in all the sets $S_i$). 
    
    The vertex $u$ belongs to $K_a \cup e$. We now show that $u$ must be in $K_a$. Indeed, suppose it is not; then the edge $uv$ connects $e$ with $K_b \cup K_c$. In $C_1\ra_{S_3}C_2$, this edge comes from $C_1$ (on Figure~\ref{fig:sets}, all such edges are blue). However, in $C_1$ the edges between $e$ and $K_b \cup K_c$ are closed, which would imply $uv$ is closed in $C_1\ra_{S_3}C_2$, a contradiction since $uv$ lies on the open path $\mathcal{P}$.

    Therefore, $u \in K_a$ and $v \in K_b \cup K_c$. Depending on whether $v \in K_b$ or $v \in K_c$, the edge $uv$ belongs to $\bar{S_1}$ or $\bar{S_2}$. Since all internal edges in $K_b$ and $K_c$ belong to $S_1$ and $S_2$, it follows that $C_1\ra_{S_1}C_2 \in ab$ or $C_1\ra_{S_2}C_2 \in ac$.
\end{proof}

The consequence of the Proposition~\ref{prop:key} is the following inequality: 

\begin{multline} \label{eq:prop_cons}
\mu\Big(C_1 \in a|b|c \text{ and }C_1 \ra_{S_3} C_2 \in (ab \cup ac)\Big) \\ \le \mu(C_1 \in a|b|c \text{ and } C_1 \ra_{S_1} C_2 \in ab) + \mu(C_1 \in a|b|c \text{ and } C_1 \ra_{S_2} C_2 \in ac).
\end{multline}

Let's proceed to estimate the probabilities of these events. For $C_1 \in a|b|c$, we have $C_1 \ra_{S_1} C_2 \in a|c$, so
$$\mu(C_1 \in a|b|c \text{ and } C_1 \ra_{S_1} C_2 \in ab) \le \mu(C_1 \ra_{S_1} C_2 \in ab|c) = \mu(ab|c).$$
Similarly, 
$$\mu(C_1 \in a|b|c \text{ and } C_1 \ra_{S_2} C_2 \in ac) \le \mu(ac|b).$$

Finally, we estimate $\mu\left(C_1 \in a|b|c \text{ and } C_1 \ra_{S_3} C_2 \in (ab \cup ac)\right)$.
If $C_1$ belongs to $a|b \cap a|c$, then $\bar{S_3}$ contains a cut from $a$ to $b$ and $c$, so $C_1 \ra_{\bar{S_3}} C_2$ also belongs to $a|b \cap a|c$.
\begin{align*}
\mu\Big(C_1 \in a|b|c &\text{ and } C_1 \ra_{S_3} C_2 \in (ab \cup ac)\Big) \\
&\ge \mu\Big(C_1 \in (a|b \cap a|c) \text{ and } C_1 \ra_{S_3} C_2 \in (ab \cup ac)\Big) - \mu(a|bc) \\
&= \mu\Big(C_1 \ra_{\overline{S_3}} C_2 \in (a|b \cap a|c) \text{ and } C_1 \ra_{S_3} C_2 \in (ab \cup ac)\Big) - \mu(a|bc) \\
&= \mu(a|b \cap a|c)\mu(ab \cup ac) - \mu(a|bc).
\end{align*}

Substituting our bounds into \eqref{eq:prop_cons}, we conclude

\begin{equation}\label{eq:final}
\mu(a|b \cap a|c)\mu(ab \cup ac) \le \mu(ab|c) + \mu(ac|b) + \mu(a|bc).
\end{equation}

To conclude the proof of Theorem~\ref{3hypernotinbond}, assume that alternatives (i) and (ii) both fail, and verify that alternative (iii) holds. Let \(\delta>0\) and put
\[
\varepsilon \;=\;\frac{\delta^2}{4}.
\]
Suppose for some graph \(G\), Bernoulli percolation \(\mu\), and vertices \(a,b,c\) we have
\[
\mu(abc)\le1-\delta
\quad\text{and}\quad
\mu(a|b|c)\le1-\delta.
\]
Since trivially \(\mu(ab\cup ac)\ge\mu(abc)\) and \(\mu(a|b\cap a|c)\ge\mu(a|b|c)\), the displayed inequality gives
\[
\mu(abc)\,\mu(a|b|c)
\;\le\;
\mu(ab|c)+\mu(ac|b)+\mu(a|bc)
\;=\;
1-\mu(abc)-\mu(a|b|c).
\]
Hence
\[
\mu(abc)+\mu(a|b|c)
\;\le\;
1-\mu(abc)\,\mu(a|b|c).
\]
If \(\mu(abc)\le\delta/2\) (or similarly \(\mu(a|b|c)\le\delta/2\)), then
\[
\mu(abc)+\mu(a|b|c)
\le\frac\delta2+(1-\delta)
=1-\frac\delta2
<1-\frac{\delta^2}{4}
=1-\varepsilon.
\]
Otherwise \(\mu(abc),\mu(a|b|c)\ge\delta/2\) and so
\[
\mu(abc)+\mu(a|b|c)
\le1-\frac\delta2\cdot\frac\delta2
=1-\frac{\delta^2}{4}
=1-\varepsilon.
\]
In either case
\(\mu(abc)+\mu(a|b|c)\le1-\varepsilon\),
which is precisely alternative (iii).  This completes the proof.
\end{proof}

    \begin{remark}
    Theorem~\ref{3vgraph} follows from here by considering site percolation on $K_{1, 3}$ in the same manner as Theorem~\ref{4vgraph} was deduced from Theorem~\ref{4hypernotinbond}.
    \end{remark}

From the equation~\eqref{eq:final}, one can conclude that if $\mu(abc)$ and $\mu(a|b|c)$ are simultaneously greater than $p$, then \(p(1-p) \le 1-2p\) and so \(p \le \frac{3-\sqrt{5}}{2} \approx 0.382\). If we denote the maximal possible value of $\min\big(\mu(abc), \mu(a|b|c)\big)$ for any bond percolation by $\alpha_3$, we get an estimate $\alpha_3 < 0.382$, which we improve in the next section. The lower bound $\alpha_3 > 0.29065$ is given in Appendix~\ref{alpha3}.

\section{Simulating 3-hyperedge: computer-assisted proof} 
\label{computer}

Consider a bond percolation on a graph $G$ with specified vertices $a$, $b$, and $c$. We examine a set of decision trees $T_k$ for $k = 1, \dots, n$. These trees generate the sets $S_k$ and $\overline{S}_k$. Each tree maps every pair of configurations $C_1$ and $C_2$ on $G$ into a product space $J^2$, with $J = \{a|b|c, a|bc, ac|b, ab|c, abc\}$. The first coordinate maps the partition of vertices $a, b, c$ in the graph $C_1 \ra_{S_k} C_2$ into connected clusters, and the second coordinate corresponds to the partition of vertices in the graph $C_1 \ra_{\overline{S}_k} C_2$.\footnote{Take into account that there is a trivial tree $T$ that generates the set of all the edges $S = E$; for this tree, $C_1 \ra_S C_2 = C_1$ and $C_1 \ra_{\overline{S}} C_2 = C_2$.}

The bond percolation on $G$ induces a joint probability distribution $\rho$ on $(J^2)^n$. However, it is important to note that $\rho$ may be restricted in several ways. Not every combination of partitions $(p_k, \bar{p}_k)$ for $k = 1, \dots, n$ corresponds to a pair of actual configurations $C_1$ and $C_2$ that satisfy conditions $C_1 \ra_{S_k} C_2 = p_k$ and $C_1 \ra_{\overline{S}_k} C_2 = \bar{p}_k$. Denote by $F \subseteq (J^2)^n$ the set of all feasible combinations of these partitions. Thus, the support of $\rho$ is contained within $F$.

Let $\mu$ be a probability distribution on $J$ induced by bond percolation on $G$. Lemma~\ref{tree_lemma} implies that all the marginal projections of $\rho$ onto each $(J^2)_k$ are identical and equal to $\mu \times \mu$. Given these restrictions, we can formulate a necessary condition for the implementability of the distribution $\mu$ on $J$, expressed through the feasibility of a linear programming problem.

\begin{proposition}\label{prop:primal_criterion}
    For any feasible distribution $\mu$ on $J$, there exists a distribution $\rho(p) \ge 0$ defined on the set $p \in F$, satisfying the following condition:
    \begin{align}\label{eq:primal_constraint}
    \sum_{\substack{p \in F\colon \\ p_k = q,\,\overline{p}_k = \overline{q}}} \rho(p) = \mu(q) \cdot \mu(\bar{q}) \quad &\text{for all $k = 1, \dots, n$, and for each pair $(q, \overline{q}) \in J^2$}.
    \end{align}
\end{proposition}

The above proposition does not offer a necessary condition that can be expressed solely in terms of $\mu$ without the introduction of additional variables. To achieve a formulation that depends only on $\mu$, we consider the dual linear programming problem to the one considered in Proposition~\ref{prop:primal_criterion}.

\begin{definition}
    \textit{Feasible potentials} are the collection of functions $\varphi_k\colon J^2 \to \mathbb{R}$ satisfying the inequalities
    \begin{equation}\label{eq:dual_feasibility}
    \sum_{k = 1}^n \varphi_k(p_k, \overline{p}_k) \ge 0 \quad \text{for all feasible $p \in F$}.
    \end{equation}
    Each function $\varphi_k$ can be interpreted as a variable dual to the marginal projection constraint \eqref{eq:primal_constraint}.
\end{definition}

Next, we utilize the principle that the feasibility of the primal linear program is equivalent to the boundedness of the dual linear program.

\begin{theorem}\label{thm:dual_potentials}
    Let $\varphi_k$ be feasible potentials. Then, any feasible distribution $\mu$ on $J$ satisfies the inequality
    \begin{equation}
    \sum_{k = 1}^n \sum_{(p, \overline{p}) \in J^2} \varphi_k(p, \overline{p}) \mu(p) \mu(\overline{p}) \ge 0.
    \end{equation}
\end{theorem}

\begin{proof}
    Let $\mu$ be a feasible distribution on $J$. By Proposition~\ref{prop:primal_criterion}, we can find a joint law $\rho$ supported on the set $F$ of feasible combinations of partitions with all marginal projections equal to $\mu \times \mu$. The latter condition implies that
    \[
    \sum_{k = 1}^n \sum_{(p, \overline{p}) \in J^2} \varphi_k(p, \overline{p}) \mu(p) \mu(\overline{p}) = \sum_{k = 1}^n \sum_{p \in F} \varphi_k(p_k, \overline{p}_k) \rho(p).
    \]
    By the definition of feasible potentials, the inequality
    \[
    \sum_{k = 1}^n \varphi_k(p_k, \overline{p}_k) \ge 0
    \]
    holds for all $p \in F$; thus, the right-hand side of the equation above is non-negative.
\end{proof}

This theorem allows to prove the inequality
\begin{equation}\label{eq:computer}
\mu(a|b \cap a|c)\mu(ab \cup ac) \le \mu(ab|c) + \mu(ac|b) + \mu(a|bc) - \mu(ab|c)^2-\mu(ac|b)^2,
\end{equation}
which is obviously better than inequality~\eqref{eq:final} and leads to an estimate $\alpha_3 \le 0.369$. 
Moreover, surprisingly, this theorem also proves the inequality 
\begin{equation}\label{e2}
\mu(abc)\mu(a|b|c) \ge \mu(ab|c)\mu(ac|b)+\mu(ab|c)\mu(a|bc)+\mu(ac|b)\mu(a|bc),
\end{equation}
which was first conjectured in an unpublished work by Erik Aas and proved in \cite{G}. It is stronger than what the Harris--Kleitman inequality can tell about these events. 

To prove the inequality \eqref{eq:computer}, we formulate it in terms of the feasible distribution $\mu$.

\begin{proposition}
    Any feasible distribution $\mu$ on $J$ satisfies the inequality 
    \[
\mu(ab \cup ac) \cdot \mu(a|b \cap a|c) + \mu(ac|b)^2 + \mu(ab|c)^2 \le \mu(a|bc) + \mu(ac|b) + \mu(ab|c).
    \]
\end{proposition}
\begin{proof}
   Consider the decision trees constructing the sets $S_1$, $S_2$, $S_3$, and their complements as introduced in the proof of Theorem~\ref{3hypernotinbond}. In addition, we include a trivial decision tree that constructs the sets $S_0 = E$ and $\overline{S}_0 = \emptyset$, so that $C_1 \ra_{S_0} C_2 = C_1$ and $C_1 \ra_{\overline{S}_0} C_2 = C_2$. Define functions $\varphi_k\colon J^2 \to \mathbb{R}$, for $k = 0, \dots, 3$, as follows:
   \begin{align*}
   \varphi_0(p, \overline{p}) &= \mathbf{1}[p = a|bc],\\
   \varphi_1(p, \overline{p}) &= \mathbf{1}[p = ab|c],\\
   \varphi_2(p, \overline{p}) &= \mathbf{1}[p = ac|b],\\
   \varphi_3(p, \overline{p}) &= -\mathbf{1}[p \in ab \cup ac \text{ and } \overline{p} \in a|b \cap a|c] - \mathbf{1}[p = \overline{p} = ac|b] - \mathbf{1}[p = \overline{p} = ab|c].
   \end{align*}

For each vertex $u \in G$, let $K_u$ be the cluster in $C_1$ containing $u$. Denote by $E_u$ the set of open edges within $K_u$ in $C_1$. Additionally, denote by $\tilde{E}_u$ the set of all edges in $G$ for which at least one endpoint is in $K_u$. By construction, the states of all the edges in $\tilde{E}_c$ are identical in the configurations $C_1$ and $C_1 \ra_{S_1} C_2$; therefore, any event of the form ``there is an open path between $u$ and $c$'', where $u$ is any vertex of $G$, occurs simultaneously in the configurations $C_1$ and $C_1 \ra_{S_1} C_2$. Similarly,

\begin{itemize}
    \item $C_1$ and $C_1 \ra_{S_2} C_2$ coincide on $\tilde{E}_b$; thus, whether an open path between $u$ and $b$ exists is consistent across both $C_1$ and $C_1 \ra_{S_2} C_2$ for any vertex $u$.
    \item $C_1$ and $C_1 \ra_{\overline{S}_3} C_2$ coincide on $\tilde{E}_a$; therefore, the existence of an open path between $u$ and $a$ is the same in both $C_1$ and $C_1 \ra_{\overline{S}_3} C_2$ for any vertex $u$.
\end{itemize}
Finally, we recall an important observation from the proof of Theorem~\ref{3hypernotinbond}: if $C_1 \in a|b|c$ and $C_1 \ra_{S_3} C_2 \in ab \cup ac$, then either $C_1 \ra_{S_1} C_2 \in ab$ or $C_1 \ra_{S_2} C_2 \in ac$.

Now, we are ready to prove that the functions $\varphi_k$ are feasible potentials: the inequality
\[
\sum_{k = 0}^3 \varphi_k(p_k, \overline{p}_k) \ge 0
\]
holds for any feasible combination of partitions $(p_k, \overline{p}_k)$. Since only $\varphi_3$ can take negative values, it is sufficient to consider the following cases where $\varphi_3$ contributes negatively:
\begin{itemize}
    \item $p_3 \in ab \cup ac \text{ and } \overline{p}_3 \in a|b \cap a|c$,
    \item $p_3 = \overline{p}_3 = ab|c$,
    \item $p_3 = \overline{p}_3 = ac|b$.
\end{itemize}
In each of these cases, it is sufficient to demonstrate that either $p_0 = a|bc$, or $p_1 = ab|c$, or $p_2 = ac|b$.

Consider the first case: $p_3 \in ab \cup ac$ and $\overline{p}_3 \in a|b \cap a|c$. Since $C_1$ and $C_1 \ra_{\overline{S}_3} C_2$ coincide on $\tilde{E}_a$, and because the partition of $C_1 \ra_{\overline{S}_3} C_2$ is $\overline{p}_3 \in a|b \cap a|c$, we conclude that $C_1 \in a|b \cap a|c$. This implies that either $C_1 \in a|bc$ or $C_1 \in a|b|c$. 

In the case where $C_1 \in a|bc$, we have $p_0 = a|bc$. Alternatively, if $C_1 \in a|b|c$ and $C_1 \ra_{S_3} C_2 \in ab \cup ac$, then by Proposition~\ref{prop:key} from the proof of Theorem~\ref{3hypernotinbond}, either $C_1 \ra_{S_1} C_2 \in ab$ or $C_1 \ra_{S_2} C_2 \in ac$. This condition translates to either $p_1 \in ab$ or $p_2 \in ac$. Furthermore, since $C_1$ and $C_1 \ra_{S_1} C_2$ coincide on $\tilde{E}_c$ and $C_1 \in c|a \cap c|b$, it follows that $C_1 \ra_{S_1} C_2 \in c|a \cap c|b$ and hence $p_1 \in c|a \cap c|b$. Similarly, $p_2 = b|a \cap b|c$. Thus, $C_1 \in a|b|c$ and $C_1 \ra_{S_3} C_2 \in ab \cup ac$ implies that either $p_1 = ab|c$ or $p_2 = ac|b$, covering all scenarios that ensure that the sum $\varphi_k(p_k, \overline{p}_k)$ is non-negative. This completes the analysis for the first case.

Consider the second case: $p_3 = \overline{p}_3 = ab|c$. Repeating the argument that the configurations $C_1$ and $C_1 \ra_{\overline{S}_3} C_2$ coincide on $\tilde{E}_a$, we find that there is an open path between $a$ and $b$ in $C_1$, and $a$ and $c$ are in different clusters. Thus, $C_1 \in ab|c$, implying that $p_0 = ab|c$ as well.

Next, we observe that if $C_1 \in ab|c$, then the set $S_1$ coincides with $\tilde{E}_c$. This specifically implies that $C_1 \ra_{S_1} C_2 \in c|a \cap c|b$, and we need only to verify that $C_1 \ra_{S_1} C_2 \in ab$.

Consider an open path $\gamma$ between $a$ and $b$ in the configuration $C_1 \ra_{S_3} C_2$. Note that no vertex from this path can belong to $K_c$; otherwise, since the component $K_c$ is connected in $C_1 \ra_{S_3} C_2$, it would imply that $C_1 \ra_{S_3} C_2 \in abc$, which contradicts our assumption. Therefore, all the edges of $\gamma$ must belong to $E \setminus \tilde{E}_c$. Given that $C_1 \in ab|c$, the set $\overline{S}_3$ contains all edges from $E \setminus \tilde{E}_c$; thus, the edge states of $\gamma$ are derived from the configuration $C_2$, ensuring that $\gamma$ is an open path in $C_2$. Additionally, since $\overline{S}_1 = E \setminus \tilde{E}_c$, all edge states of $\gamma$ in $C_1 \ra_{S_1} C_2$ are also from $C_2$, confirming that $\gamma$ is an open path in $C_1 \ra_{S_1} C_2$, which implies $C_1 \ra_{S_1} C_2 \in ab$.

Therefore, $C_1 \ra_{S_1} C_2 \in c|a \cap c|b \cap ab = ab|c$, translating to $p_1 = ab|c$. The third case, where $p_3 = \overline{p}_3 = ac|b$, is fully symmetric, leading to $p_2 = ac|b$.

Altogether, this demonstrates that the functions $\varphi_k$ are feasible potentials. According to Theorem~\ref{thm:dual_potentials}, any feasible distribution $\mu$ on $J$ must satisfy the inequality:
\begin{align*}
\sum_{k = 0}^3 \sum_{(p, \overline{p}) \in J^2} \varphi_k(p, \overline{p})\mu(p)\mu(\overline{p}) = &\ \mu(a|bc) + \mu(ac|b) + \mu(ab|c) \\
& - \mu(ab \cup ac) \cdot \mu(a|b \cap a|c) - \mu(ac|b)^2 - \mu(ab|c)^2 \ge 0.
\end{align*}
\end{proof}

\begin{remark}
    Notice that feasible potentials form a convex cone. An interesting computational task is to enumerate all its extreme rays. We have performed this numerically for the decision trees constructing the sets $S_k$ for $k = 0, \dots, 3$, and their complements, finding exactly three non-trivial rays that form this cone. The first two are responsible for the inequalities \eqref{eq:computer} and \eqref{e2}. The final one leads to the inequality:
    \begin{align*}
    \mu(ab \cup ac) \cdot \mu(a|b \cap a|c) + \mu(a|bc)^2 + \mu(ac|b)^2 + \mu(ab|c)^2 & \\
    \le \mu(a|bc) + \mu(ac|b) + \mu(ab|c) + \mu(abc) \cdot \mu(a|bc), &
    \end{align*}
    which is very similar to \eqref{eq:computer}. The potentials leading to these inequalities can be found on GitHub.\footnote{\href{https://github.com/Kroneckera/bunkbed}{https://github.com/Kroneckera/bunkbed}}
\end{remark}

\section{Further questions}

Inequalitites~\eqref{eq:final} and~\eqref{eq:computer} prove that if all three probabilities $\mu(ab|c)$, $\mu(ac|b)$ and $\mu(a|bc)$ are $0$, then one of $\mu(abc)$ and $\mu(a|b|c)$ should be $0$. In fact, the stronger statement holds: 

\begin{proposition}
If $\mu(ab|c) = 0$, then
$$\mu(a|b|c)\mu(abc) = \mu(ac|b)\mu(a|bc)\text{ and }\mu(abc) = \mu(ac)\mu(bc).$$
\end{proposition}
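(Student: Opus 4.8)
The plan is to show that the hypothesis $\P(ab\|c)=0$ forces $c$ to be a cut vertex separating $a$ from $b$, so that $G$ decomposes into two pieces glued at $c$ carrying independent percolations; once this is established, both identities reduce to a one-line product computation.

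First I would normalise $G$ as in the proof of Theorem~\ref{sitenotinbond}: delete every edge of probability $0$ and contract every edge of probability $1$, so that no edge is deterministic. I then claim that every path from $a$ to $b$ in $G$ visits $c$. Indeed, if some path $P$ from $a$ to $b$ missed $c$, the configuration in which exactly the edges of $P$ are open would have positive probability and lie in the event $ab\|c$ — the open cluster of $a$ is then the vertex set of $P$, which contains $b$ but not $c$ — contradicting $\P(ab\|c)=0$. If the contraction has identified $c$ with $a$ or with $b$ this claim is vacuous; in each such degenerate case every term of both asserted identities either vanishes or collapses to a triviality, so I will treat only the case where $a$, $b$ and $c$ remain distinct.

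Next I let $V_a$ consist of $c$ together with every vertex that is joined to $a$ by a path of $G$ avoiding $c$, and put $V_b:=(V\setminus V_a)\cup\{c\}$. By the claim, $a\in V_a$, $b\in V_b$ and $V_a\cap V_b=\{c\}$; moreover no edge joins $V_a\setminus\{c\}$ to $V_b\setminus\{c\}$, since such an edge would drag its endpoint lying outside $V_a$ back into $V_a$. Hence the edge set partitions as $E=E_a\sqcup E_b$, where $E_x$ is the set of edges with both endpoints in $V_x$, and the bond percolation factorises as a pair of independent configurations $\omega_a$ on $E_a$ and $\omega_b$ on $E_b$. Because any path leaving $V_a$ must pass through $c$, the event $X$ that $a$ and $c$ lie in one cluster depends only on $\omega_a$, and likewise the event $Y$ that $b$ and $c$ lie in one cluster depends only on $\omega_b$; and since every $a$--$b$ path visits $c$, the event that $a$ and $b$ lie in one cluster is exactly $X\cap Y$.

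Feeding this into the five connectivity patterns of $\{a,b,c\}$ yields the event identities $abc=X\cap Y$, $a\|bc=X^{c}\cap Y$, $ac\|b=X\cap Y^{c}$, $a\|b\|c=X^{c}\cap Y^{c}$ and $ab\|c=\varnothing$ (the patterns asserting that $a$ is not connected to $b$ all use that $a$ is connected to $c$ whenever it is connected to $b$). Since $X$ and $Y$ are independent with $\P(X)=\P(ac)$ and $\P(Y)=\P(bc)$, the second identity is immediate, $\P(abc)=\P(X)\P(Y)=\P(ac)\P(bc)$, and for the first,
$$\P(a\|b\|c)\,\P(abc)=\P(X^{c})\P(Y^{c})\P(X)\P(Y)=\bigl(\P(X)\P(Y^{c})\bigr)\bigl(\P(X^{c})\P(Y)\bigr)=\P(ac\|b)\,\P(a\|bc).$$
I expect the only real obstacle to be making the decomposition airtight — checking that $X$ genuinely depends on $\omega_a$ alone, that no edge is lost or double-counted in $E=E_a\sqcup E_b$, and disposing cleanly of the degenerate contractions; everything after the split is routine.
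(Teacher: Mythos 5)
Your proof is correct and follows the same route as the paper's: after deleting trivial edges and contracting deterministic ones, the hypothesis $\P(ab\|c)=0$ forces $c$ to separate $a$ from $b$, so the two sides of the cut carry independent percolations and both identities become product computations. You spell out the cut decomposition and the degenerate contraction cases in more detail than the paper, but the key observation is identical.
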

\begin{proof}
As in the proof of Theorem~\ref{sitenotinbond}, we first delete the edges having probability $0$ and contract the edges having probability $1$. Now all paths from $a$ to $b$ should pass through $c$, otherwise, there will be a nonzero probability of one such path being open and the rest of the edges closed. This means $c$ splits the graph in two parts with $a$ and $b$ belonging to different parts. Thus, the events $ac$ and $bc$ are determined by different sets of edges and consequently are independent.

\end{proof}

However, contrary to the inequalitites~\eqref{eq:final} and~\eqref{eq:computer}, this proof tells nothing when $\mu(ab|c) < \varepsilon$. So, we pose two conjectures increasing in strength:

\begin{conjecture}
For any $\varepsilon > 0$ there exists $\delta > 0$ such that if $\mu(ab|c) < \delta$ and $\mu(ac|b) < \delta$, then $\mu(abc)$ or $\mu(a|b|c)$ is less than $\varepsilon$.
\end{conjecture}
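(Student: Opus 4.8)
The plan is to reduce the conjecture to a single polynomial inequality among the five partition probabilities and to prove that inequality by extending the decision-tree and linear-programming analysis of Sections~\ref{3v} and~\ref{computer}. The target inequality is
\[
\P(abc)\cdot\P(a\|b\|c)\ \le\ \P(ab\|c)+\P(ac\|b).
\]
Granting it, the conjecture is immediate: if $\P(ab\|c),\P(ac\|b)<\delta:=\varepsilon^{2}/2$, then $\P(abc)\,\P(a\|b\|c)<\varepsilon^{2}$, so $\min\bigl(\P(abc),\P(a\|b\|c)\bigr)<\varepsilon$ because $\min(x,y)^{2}\le xy$. Two facts single this out as the right target. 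It is tight on the boundary: when $\P(ab\|c)=0$ the Proposition gives $\P(abc)\P(a\|b\|c)=\P(ac\|b)\P(a\|bc)\le\P(ac\|b)$, and symmetrically for $\P(ac\|b)=0$. And it is only a shade stronger than what is already available: bounding the left-hand side of~\eqref{eq:final} below by $\P(abc)\P(a\|b\|c)$ (using $\P(a\|b\cap a\|c)=\P(a\|b\|c)+\P(a\|bc)$ and $\P(ab\cup ac)=\P(abc)+\P(ab\|c)+\P(ac\|b)$) yields $\P(abc)\P(a\|b\|c)\le\P(ab\|c)+\P(ac\|b)+\P(a\|bc)$, so the whole task is to delete the $\P(a\|bc)$ summand.

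To delete that term I would enrich the family of decision-tree sets of Section~\ref{computer}. The sets $S_{1},S_{2},S_{3}$ there are tailored to the event $a\|b\|c$, and the surviving $\P(a\|bc)$ term is exactly where configurations in $a\|bc$ leak in --- it is the origin of the $-\P(a\|bc)$ corrections in the proof of Theorem~\ref{3hypernotinbond}. The proof of the Proposition tells us what should replace those sets near the boundary: when $\P(ab\|c)$ is small, $c$ is (after deleting probability-$0$ and contracting probability-$1$ edges) an approximate cut separating $a$ from $b$, and the percolation nearly factorises across $c$. So I would add decision trees that first reveal $Com_c$ and exploit this near-factorisation, compute the induced law on a larger product $J^{m}$, impose all marginal identities supplied by Lemma~\ref{tree_lemma} together with the infeasible-pattern constraints, and search the feasibility boundary of the resulting linear program for the desired inequality; as in Section~\ref{computer}, the output would be certified by exhibiting dual potentials on the infeasible patterns.

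If no finite linear program suffices, the fallback is a compactness argument. A failure of the conjecture gives graphs $G_n$ with bond percolations and marked vertices for which $\P_n(ab\|c),\P_n(ac\|b)\to 0$ while $\P_n(abc),\P_n(a\|b\|c)\ge\varepsilon_{0}$; one then passes to a subsequential limit of the joint law of the full decision-tree tuple --- a point of the fixed simplex on $J^{m}$ --- which still satisfies every Lemma~\ref{tree_lemma} identity and has $\P_\infty(ab\|c)=\P_\infty(ac\|b)=0$, hoping to extract the Proposition's conditional-independence statement purely from those identities and reach the contradiction $\P_\infty(abc)\,\P_\infty(a\|b\|c)=0$, which is incompatible with the limit being $\ge\varepsilon_{0}^{2}$.

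The main obstacle, common to both routes, is that the cut-vertex mechanism behind the Proposition is genuinely global, whereas the sets $S_{i}$ and their marginal constraints are local in character: it is not clear that any \emph{fixed finite} family of decision trees produces a constraint in which the $\P(a\|bc)$ monomial actually cancels rather than reappearing with a different coefficient, and it may take an unbounded nested family analysed by hand. In the compactness route the parallel difficulty is that such a limiting law need not be realised by any bond percolation, so the deletion/contraction argument cannot be invoked verbatim and the factorisation across $c$ must be read off from the marginal identities alone. I therefore expect the decisive step, on either route, to be isolating the \emph{one} additional decision-tree constraint, in the spirit of the sets of Section~\ref{computer}, that encodes ``$c$ is an approximate cut whenever $\P(ab\|c)$ is small''.
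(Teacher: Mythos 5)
This statement is labeled a \emph{Conjecture} in the paper, and the paper does not prove it; there is no ``paper's own proof'' to compare against. Read on its own terms, your submission is a reduction plus a research plan rather than a proof, and you are candid about that. The reduction step is sound and useful: the proposed target inequality $\P(abc)\,\P(a\|b\|c)\le\P(ab\|c)+\P(ac\|b)$ does imply the conjecture via $\min(x,y)^2\le xy$, and your derivation that~\eqref{eq:final} already yields the weaker bound $\P(abc)\,\P(a\|b\|c)\le\P(ab\|c)+\P(ac\|b)+\P(a\|bc)$ is correct (expand $\P(a\|b\cap a\|c)$ and $\P(ab\cup ac)$ into their five-term pieces and drop the nonnegative cross-terms). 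You have correctly isolated the single summand $\P(a\|bc)$ that must be removed, and the boundary consistency check against the Proposition is right.

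The genuine gap is that nothing in the proposal actually removes it. Both routes you sketch --- enlarging the decision-tree family fed to the linear program of Section~\ref{computer}, and the compactness argument --- are left as hopes, and you name the obstruction yourself: the cut-vertex mechanism behind the Proposition is global, it is unclear that any fixed finite family of decision trees produces a marginal constraint in which the $\P(a\|bc)$ monomial cancels rather than recurring with a different coefficient, and a subsequential limit of the joint tuple law need not be realized by any bond percolation, so the delete-and-contract argument cannot be applied to it. One further caveat worth flagging: your target is a \emph{quantitative strengthening} of the conjecture and could in principle fail even if the conjecture is true, so a correct proof of the conjecture need not pass through that exact polynomial inequality. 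In short, the proposal is a well-motivated plan that accurately identifies the missing step and is consistent with the paper's treatment of the statement as open, but it does not constitute a proof.
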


\begin{conjecture}\label{Nikita}
For any $\varepsilon > 0$ there exists $\delta > 0$ such that if $\mu(ab|c) < \delta$, then 
$$\mu(abc) - \mu(ac)\mu(bc) < \varepsilon.$$
\end{conjecture}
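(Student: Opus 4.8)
\section*{Proof proposal for Conjecture~\ref{Nikita}}

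The plan is to reduce the statement to a quantitative, $\varepsilon$--$\delta$ strengthening of the Proposition, and then to attack that strengthening with the resampling technique of Section~\ref{3v} (or, failing that, by a compactness argument). First, the algebra. Writing $\P(ac)=\P(abc)+\P(ac\|b)$ and $\P(bc)=\P(abc)+\P(a\|bc)$, and using $\P(abc)+\P(ab\|c)+\P(ac\|b)+\P(a\|bc)+\P(a\|b\|c)=1$, one obtains the exact identity
\[
\P(abc)-\P(ac)\P(bc)=\big(\P(abc)\,\P(a\|b\|c)-\P(ac\|b)\,\P(a\|bc)\big)+\P(abc)\,\P(ab\|c).
\]
The last term is at most $\P(ab\|c)<\delta$, and the whole left-hand side is $\ge 0$ by Harris/FKG, since $abc=ac\cap bc$ and both $ac$ and $bc$ are increasing. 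Hence it suffices to prove: for every $\varepsilon>0$ there is a $\delta>0$ with $\P(abc)\,\P(a\|b\|c)-\P(ac\|b)\,\P(a\|bc)<\varepsilon$ whenever $\P(ab\|c)<\delta$. This is exactly the assertion that the equality $\P(abc)\,\P(a\|b\|c)=\P(ac\|b)\,\P(a\|bc)$ of the Proposition is \emph{stable}: the difference of the two products, which vanishes on the face $\{\P(ab\|c)=0\}$ of the set of realizable probability vectors, is continuous there. Note that inequality~\eqref{e2} already gives the lower bound $\P(abc)\,\P(a\|b\|c)-\P(ac\|b)\,\P(a\|bc)\ge \P(ab\|c)\big(\P(ac\|b)+\P(a\|bc)\big)\ge 0$; what is missing is a matching upper bound that tends to $0$ with $\P(ab\|c)$.

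The main step is to produce that upper bound by the resampling method. Inequality~\eqref{e2} can be read as the statement that there is a decision-tree-built set $S$ mapping a pair of independent configurations with $C_1\in ac\|b$, $C_2\in a\|bc$ to a pair whose resamplings $C_1\ra_S C_2$, $C_1\ra_{\bar S}C_2$ lie in $abc$ and $a\|b\|c$. For the converse I would look for a set $S$ (in the style of $S_1,S_2,S_3$) such that, when $C_1\in abc$ and $C_2\in a\|b\|c$, one has $C_1\ra_S C_2\in ac\|b$ and $C_1\ra_{\bar S}C_2\in a\|bc$ \emph{except on an event of $C_1$-probability $O(\P(ab\|c))$}. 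A natural candidate: reveal the cluster $Com_c$ of $c$ in $C_1$ (which contains $a$ and $b$), put the exposed edges in $\bar S$ and the rest in $S$, so that $C_1\ra_{\bar S}C_2$ keeps $Com_c$ but rewires everything outside it. The membership we want can fail only when a fresh $a$--$b$ connection appears outside $Com_c$ in one of the resampled configurations, and the hope is to dominate the probability of this failure by the probability that such a connection is present in $C_1$ itself, which is comparable to $\P(ab\|c)$. Combining an inequality of this shape with Lemma~\ref{tree_lemma} (which pins down the product marginals) would yield $\P(abc)\,\P(a\|b\|c)-\P(ac\|b)\,\P(a\|bc)\le\eta\big(\P(ab\|c)\big)$ with $\eta\to 0$.

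As a fallback I would try a compactness route. If the conjecture failed there would be graphs $G_n$ with bond percolations, $\P_n(ab\|c)\to 0$ and $\P_n(abc)-\P_n(ac)\P_n(bc)\ge\varepsilon$; passing to a subsequence, the five connectivity probabilities converge to a point $\rho$ lying on the face $\{\P(ab\|c)=0\}$ of the closure of the realizable region, and it would suffice to show that every such $\rho$ satisfies $\P(abc)\,\P(a\|b\|c)=\P(ac\|b)\,\P(a\|bc)$. For this one can enrich the linear program of Section~\ref{computer} with additional resampling trees, additional sets $S_i$, and their product-marginal constraints, and then verify (by hand or machine) that the enriched relaxation forces this equality on the face $\{\P(ab\|c)=0\}$; the cut-vertex argument of the Proposition is the guide for which trees to add.

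The main obstacle is precisely the $O(\P(ab\|c))$ bound on the exceptional event (equivalently, the tightness of the LP on the face $\{\P(ab\|c)=0\}$). The currently known relaxation --- inequalities~\eqref{eq:final},~\eqref{eq:computer},~\eqref{e2} together with Harris --- is \emph{not} tight there, since it admits points with $\P(abc)\,\P(a\|b\|c)\ne\P(ac\|b)\,\P(a\|bc)$, so genuinely new information is needed. The difficulty in bounding the exceptional event by hand is that an edge which is pivotal for an $a$--$b$ connection avoiding $c$ only with small probability can still change the connectivity pattern, and a priori there may be many such edges whose individual effects are small but whose aggregate effect on $\P(abc)$ is not obviously $O(\P(ab\|c))$; this is why the statement is posed as a conjecture rather than proved here.
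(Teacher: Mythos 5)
This statement is Conjecture~\ref{Nikita}: the paper does not prove it, so there is no proof of record to compare against, and your text is accordingly a research plan rather than a proof. The parts you actually establish are correct and worthwhile: the identity $\P(abc)-\P(ac)\P(bc)=\bigl(\P(abc)\P(a\|b\|c)-\P(ac\|b)\P(a\|bc)\bigr)+\P(abc)\P(ab\|c)$ follows from $\P(ac)=\P(abc)+\P(ac\|b)$, $\P(bc)=\P(abc)+\P(a\|bc)$ and the partition of unity; nonnegativity is Harris; and inequality~\eqref{e2} gives the lower bound $\P(ab\|c)\bigl(\P(ac\|b)+\P(a\|bc)\bigr)$, which is exactly the computation behind Remark~\ref{KozmasQ}. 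So you have correctly reformulated the conjecture as stability of the equality in the Proposition near the face $\P(ab\|c)=0$, i.e.\ as a missing \emph{upper} bound on $\P(abc)\P(a\|b\|c)-\P(ac\|b)\P(a\|bc)$ in terms of $\P(ab\|c)$.

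That upper bound is the entire content of the conjecture, and neither of your two routes delivers it. The concrete resampling candidate is inconsistent as stated: if $C_1\in abc$ and you reveal $Com_c$ in $C_1$, placing the exposed (open interior and closed boundary) edges in $\bar S$, then $C_1\ra_{\bar S}C_2$ retains the entire cluster $Com_c\ni a,b,c$ together with its closed boundary, hence lies in $abc$ and can never lie in $a\|bc$; so this choice of $S$ cannot map $(abc,\,a\|b\|c)$ pairs to $(ac\|b,\,a\|bc)$ pairs, and no replacement tree is proposed. The compactness fallback has the same hole in different clothing: a subsequential limit of realizable vectors with $\P_n(ab\|c)\to 0$ lies in the closure of the realizable region, but the Proposition is a statement about a single graph in which $\P(ab\|c)=0$ exactly (proved via a cut vertex), and it says nothing about such limit points; as you yourself note, the known relaxation (\eqref{eq:final}, \eqref{eq:computer}, \eqref{e2}, Harris) does not force the equality on that face, and the ``enriched LP'' that would is not exhibited. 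Since you acknowledge that the exceptional-event bound of order $\P(ab\|c)$ is precisely what you cannot prove, the proposal leaves the conjecture open, in the same status as in the paper.
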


\begin{remark}\label{KozmasQ}
    On the contrary, if $\mu(abc) - \mu(ac)\mu(bc) < \varepsilon$, then, by inequality \eqref{e2}, one gets 
    \begin{multline*}
        \mu(ab|c)\Big(\mu(abc) + \mu(ac|b) + \mu(a|bc)\Big) \\ \le \mu(abc)\Big(\mu(abc) + \mu(ab|c) + \mu(ac|b) + \mu(a|bc) + \mu(a|b|c)\Big) \\- \Big(\mu(abc) + \mu(ac|b)\Big)\Big(\mu(abc) + \mu(ab|c)\Big)
    \end{multline*}
    which simplifies to
    $$\mu(ab|c) < \frac{\varepsilon}{\mu(ac \text{ or }bc)}.$$
\end{remark}

And for the final question, finding the exact value for $\alpha_3$ would also be interesting. The best boundaries are given in the Appendix~\ref{alpha3}.

\section{Acknowledgements}
We thank Igor Pak for many helpful comments on the manuscript and Tom Hutchcroft for his thoughtful review and encouragement. We also thank Dmitry Krachun for fruitful discussions and Gady Kozma for the question leading to Remark \ref{KozmasQ}.
Research supported by ERC Advanced Grant GeoScape882971.

\appendix
\section{Appendix: optimizing \texorpdfstring{$\alpha_3$}{alpha_3}}\label{alpha3}

Let us recall that $\alpha_3$ denotes the largest possible value of $\min\big(\mu(abc), \mu(a|b|c)\big)$ for the bond percolation. Let us restrict ourselves to the triangle graph with all three probabilities equal to $p$. Then $\mu(a|b|c) = (1-p)^3$ and $\mu(abc) = p^3 + 3p^2(1-p)$. These numbers coincide for $p \approx 0.3473$, and we get $\alpha_3 \ge \mu(abc) = \mu(a|b|c) \approx 0.278$, a root of the equation $x^3 - 24x^2 + 3x + 1 = 0$.

\begin{figure}[ht]
\centering
\begin{minipage}{.9\textwidth}
\begin{center}
\includegraphics[width=0.2\linewidth]{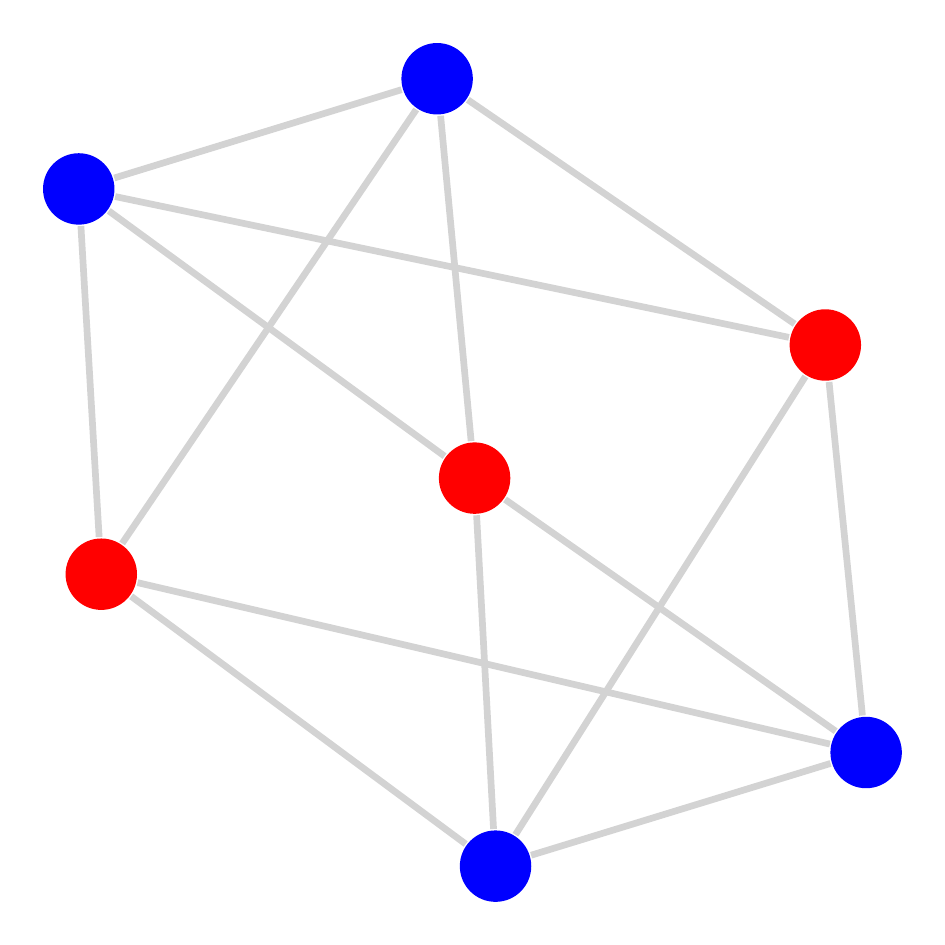}

\caption{Graph for $\alpha_3$.}\label{Fig:K23}
\end{center}
\end{minipage}
\end{figure}

One can do better by utilizing the graph in Figure~\ref{Fig:K23} where each red-blue edge has a probability of $0.32537$ and both blue-blue edges have a probability of $0.19231$. This way we get $\mu(abc) \approx \mu(a|b|c) \approx 0.29065$.

Our computer search using algorithms from Wagner \cite{Wag} wasn't able to beat this estimate (See the best $\min\big(\mu(abc), \mu(a|b|c)\big)$ achieved on each training epoch in Figure~\ref{Fig:Wagner}).  % where $\min(\mu(abc), \mu(a|b|c))$ would be at least $0.3$. 

\begin{figure}[ht]
\centering
\begin{minipage}{.9\textwidth}
\begin{center}
\includegraphics[width=0.5\linewidth]{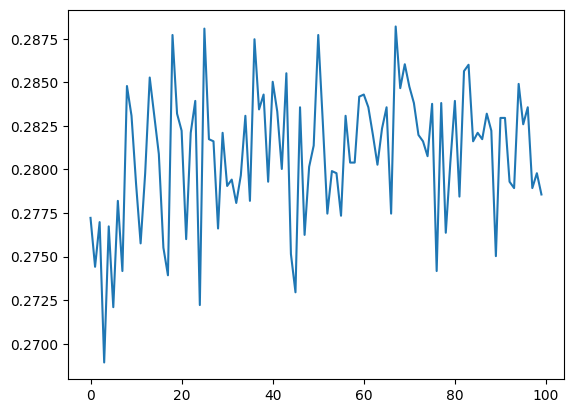}

\caption{Best $\min\big(\mu(abc), \mu(a|b|c)\big)$ achieved on each training epoch.}\label{Fig:Wagner}
\end{center}
\end{minipage}
\end{figure}

In fact, if $\mu(abc) = \mu(a|b|c)$, it seems this probability can only lie in a narrow range from $0.27$ to $0.291$. Indeed, in this case inequality~\eqref{e2} gives the lower bound of $2 - \sqrt{3} \approx 0.2679$.
% END inlined content from Text_v2.tex

\end{document}